\def\ddb#1{\sqrt{-1}\partial\bar{\partial}#1}
\def\dt#1{\frac{\partial}{\partial t}#1}
\newcommand{\dr}{\omega}
\newcommand{\e}{\epsilon}
\newcommand{\ka}{K\"{a}hler}
\theoremstyle{plain}
  \newtheorem{theorem}{Theorem}[section]
  \newtheorem{proposition}[theorem]{Proposition}
  \newtheorem{lemma}[theorem]{Lemma}
  \newtheorem{corollary}[theorem]{Corollary}
  \newtheorem{remark}[theorem]{Remark}
\theoremstyle{definition}
\numberwithin{equation}{section}
\begin{document}

\title[K\"{a}hler-Ricci flow]{The K\"{a}hler-Ricci flow on pseudoconvex domains}
\author{Huabin Ge}
\author{Aijin Lin}
\author{Liangming Shen}
\address{Department of Mathematics, Beijing Jiaotong University, Beijing, P.R. China, 100044}
\email{hbge@bjtu.edu.cn}
\address{College of Science, National University of Defense Technology, Changsha, Hunan, P.R.China, 410073}
\email{aijinlin@pku.edu.cn}
\address{Department of Mathematics, The University of British Columbia, Vancouver, B.C., Canada V6T 1Z2}
\email{lmshen@math.ubc.ca}

\begin{abstract}
 We establish the existence of \ka-Ricci flow on pseudoconvex domains with general initial metric without curvature bounds. Moreover we prove that this flow is simultaneously complete, and its normalized version converge to the complete \ka-Einstein metric, which generalizes Topping's works on surfaces.
\end{abstract}

\maketitle


\section{Introduction}\label{section1}

In \cite{GT1,GT2,Top1,Top2,Top3} Giesen and Topping began the study of the Ricci flow on surfaces with the initial metric which is incomplete or has unbounded curvature. Base on Hamilton's work on surfaces \cite{Ha}, they used different settings with respect to different conformal structures. To overcome the bad initial condition, they approximated the initial metrics and established the compactness of the approximating flow sequence using Perelman's pseudolocality theorem \cite{Pe}. In \cite{Top3} Topping pointed out that in high dimensional case, specifically for \ka\ manifolds, one possible approach is to study the complex Monge-Amp\'{e}re flow associated to the \ka-Ricci flow. In this paper we will consider the \ka-Ricci flow on pseudoconvex domains as a natural generalization.\\

As a fundamental study object in complex analysis and complex geometry, there are quite a lot researches on the pseudoconvex domains in different fields. For example, Fefferman considered the Bergman kernel and related it to the complex Monge-Amp\'{e}re equations \cite{Fe}. In \cite{CY} Cheng and Yau began to study the existence of \ka-Einstein metrics with negative Ricci curvature on general pseudoconvex manifolds. Furthermore Mok and Yau studied the completeness of the \ka-Einstein metrics on bounded domains of holomorphy and gave some curvature criterions related to those domains \cite{MY}.\\

Similar to compact \ka\ manifolds, the \ka-Ricci flow can be applied to the study of complete \ka\ metrics on noncompact \ka\ manifolds. In \cite{Chau}, Chau proved that the normalized \ka-Ricci flow converges to the complete \ka-Einstein metric provided that the initial metric $\dr_{0}$ is smooth, complete and satisfies that $Ric(\dr_{0})+\dr_{0}=\ddb f$ for some smooth and bounded function $f.$ This result also holds for pseudoconvex manifolds by \cite{CY,MY}. \\

In this paper, we will establish the existence and find the long time behavior of the \ka-Ricci flow with weaker initial metrics. Similar to Giesen and Topping, we can show the property of simultaneously completeness of the \ka-Ricci flow. First we consider a strictly pseudoconvex domain $\Omega$ with smooth boundary, i.e., there exists a smooth strictly plurisubharmonic function $\varphi$ defined on $\Omega$ such that $\partial\Omega=\{\varphi=0\}$ is smooth. By \cite{CY} there exists a complete metric defined by $-\ddb\log(-\varphi)$ which is asymptotically hyperbolic near the boundary. Suppose our initial metric is incomplete, we have the following result:

\begin{theorem}\label{mainthm1}
Given an incomplete \ka\ metric $\dr_{0}$ with uniformly bounded covariant derivatives on the strictly pseudoconvex domain $\Omega\subset\mathbb{C}^{n}$ with smooth boundary which is defined by $\partial\Omega=\{\varphi=0\},$ the solution to the \ka-Ricci flow
\begin{equation}\label{eq:krf1}
 \left\{ \begin{array}{rcl}
\dt\dr&=&-Ric(\dr)\\
\dr(0)&=&\dr_{0} \end{array}\right.
\end{equation}
exists on $\Omega\times[0,\infty)$ and the solution $\dr(t)$ is complete when $t>0.$ Moreover, the corresponding solution to the normalized \ka-Ricci flow converges to the complete \ka-Einstein metric as $t\to\infty.$
\end{theorem}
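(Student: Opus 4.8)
The plan is to construct the flow by an exhaustion-and-approximation argument modeled on Giesen--Topping, but implemented at the level of the complex Monge--Amp\`ere flow as suggested in \cite{Top3}. Exhaust $\Omega$ by a sequence of smoothly bounded strictly pseudoconvex domains $\Omega_{j}\Subset\Omega_{j+1}$ with $\bigcup_{j}\Omega_{j}=\Omega$; on each $\Omega_{j}$ the domain carries a complete Cheng--Yau type metric $g_{j}$ with $\mathrm{Ric}(g_{j})+g_{j}=\ddb f_{j}$ for bounded $f_{j}$, so by \cite{Chau,CY,MY} the normalized \ka-Ricci flow starting from $g_{j}$ exists for all time, stays complete, and converges to the complete \ka-Einstein metric $g_{KE,j}$ on $\Omega_{j}$. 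The first main step is to pass to a limit $j\to\infty$ of these complete flows to produce a solution $\dr(t)$ on $\Omega\times[0,\infty)$; the content here is a local estimate for $\dr(t)$ that is uniform in $j$ on compact subsets of $\Omega$ and on $[0,T]$, which is where the hypothesis that $\dr_{0}$ has uniformly bounded covariant derivatives enters --- it pins down the approximating data near any fixed compact set. I would phrase the flow in terms of the potential: writing $\dr(t)=\dr_{0}-\mathrm{Ric}(\dr_{0})\,t+\ddb\phi$ with $\phi$ solving a parabolic complex Monge--Amp\`ere equation $\dt\phi=\log\det\!\big(\dr_{0}-\mathrm{Ric}(\dr_{0})t+\ddb\phi\big)/\det(\dr_{0})$, one gets interior $C^{0}$, then Evans--Krylov, then bootstrap estimates on $\phi$ that do not see the boundary.

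The second main step is completeness of $\dr(t)$ for $t>0$. Here one cannot appeal to the approximants directly, since completeness is not preserved in the interior limit; instead I would use the incompleteness of $\dr_{0}$ together with an \emph{upper} barrier by the Cheng--Yau metric $-\ddb\log(-\varphi)$. The idea, following Topping's ``being lower-bounded by a complete metric is preserved,'' is to compare $\dr(t)$ from below near $\partial\Omega$ with a rescaled asymptotically hyperbolic metric: a localized maximum-principle argument on the trace $\mathrm{tr}_{\dr(t)}(-\ddb\log(-\varphi))$, or equivalently on the ratio of volume forms plus a Laplacian bound, shows that $\dr(t)\ge c(t)\,(-\ddb\log(-\varphi))$ near the boundary for some $c(t)>0$, and since the Cheng--Yau metric is complete this forces $\dr(t)$ to be complete. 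The instantaneous onset (completeness for all $t>0$ from incomplete $\dr_{0}$) comes from the strict parabolicity: the flow immediately ``fills in'' toward the boundary, quantified by a lower bound on the evolving volume form obtained from the minimum principle applied to $\log(\det\dr(t)/\det\dr_{0})$ against a suitable boundary-blowing-up subsolution built from $\varphi$.

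The third step is the long-time behavior. Consider the normalized flow $\dt\tilde\dr=-\mathrm{Ric}(\tilde\dr)-\tilde\dr$; once $\dr(t_{0})$ is complete with bounded curvature for some $t_{0}>0$ --- which the interior estimates plus the boundary barrier give after a short time --- Chau's theorem \cite{Chau} applies on $\Omega$ itself and yields convergence of $\tilde\dr(t)$ to the complete \ka-Einstein metric, whose existence and uniqueness on $\Omega$ are guaranteed by \cite{CY,MY}. One should also check that the limit does not depend on the choice of exhaustion, which follows from a comparison/uniqueness argument for the limiting flow among complete solutions bounded below by $-\ddb\log(-\varphi)$.

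I expect the main obstacle to be the second step: establishing that completeness is attained instantaneously and then propagated, uniformly enough to feed into Chau's bounded-curvature hypothesis. The difficulty is that the natural estimates from the Monge--Amp\`ere formulation are interior, so one must build explicit boundary barriers out of $\varphi$ (and its powers, e.g.\ $-\log(-\varphi)$ and $(-\varphi)^{\eta}$) that are compatible with the evolving metric, and run a maximum principle on a noncompact domain where $\dr(t)$ is only known to be complete a posteriori --- this requires a careful localization (cutoff times a function of $\varphi$) and control of the error terms coming from $\mathrm{Ric}(\dr_{0})$ and from $\ddb$ of the cutoff.
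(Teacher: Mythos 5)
Your overall strategy diverges from the paper's in a way that runs into a known obstruction. The paper does \emph{not} exhaust $\Omega$ by subdomains: since $\Omega$ itself is strictly pseudoconvex with smooth boundary, the complete Cheng--Yau reference metric $\bar{\dr}=-\ddb\log(-\varphi)$ already exists globally on $\Omega$, and the whole construction is carried out there. The exhaustion-by-$\Omega_{j}$ scheme is precisely the approach the authors discuss (for \emph{general} pseudoconvex domains) and set aside: unlike the \ka-Einstein case, the flow solutions on the $\Omega_{j}$ cannot be locally uniformly controlled from above by the local Poincar\'{e} metric, so the compactness needed to pass to the limit $j\to\infty$ is not available. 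Moreover, as you have set it up the approximating flows start from the Cheng--Yau metrics $g_{j}$ rather than from $\dr_{0}$, so even granting compactness the limit would not solve the initial value problem with $\dr(0)=\dr_{0}$.

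The genuine difficulty in this theorem is the degeneracy at $t=0$ caused by the incompleteness of $\dr_{0}$, and your proposal does not contain the device that resolves it. The paper writes $\dr(t)=\dr_{t}+\ddb u$ with the moving background $\dr_{t}=\dr_{0}+(n+1)t\bar{\dr}$, which is \emph{itself complete for every $t>0$}; it then regularizes to $\dr_{t,\e}=\dr_{0}+(n+1)(t+\e)\bar{\dr}$ so that the initial metric is complete with bounded curvature (Shi's theorem applies), derives estimates uniform in $\e$ via the Omori--Yau maximum principle (a $C^{0}$ bound, $n\log t-C_{1}(t)\leq\dot{u}_{\e}\leq C_{2}(t)/t+n$, and the two-sided bound $C_{3}(t)\bar{\dr}\leq\dr_{\e}(t)\leq C_{4}(t)\bar{\dr}$ from the Chern--Lu inequality), and lets $\e\to 0$. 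Completeness for $t>0$ is then immediate from $\dr(t)\geq C_{3}(t)\bar{\dr}$. Your ansatz $\dr(t)=\dr_{0}-t\,Ric(\dr_{0})+\ddb\phi$ uses a background that stays incomplete (and need not stay positive, since $Ric(\dr_{0})$ has no sign), so completeness would have to be produced entirely by $\ddb\phi$ blowing up at $\partial\Omega$ --- something interior Evans--Krylov estimates cannot give; and the barrier inequality $\dr(t)\geq c(t)\bar{\dr}$ you hope to prove by a localized maximum principle is exactly the Chern--Lu estimate, whose justification (Omori--Yau on a complete bounded-curvature solution) already presupposes the $\e$-regularization. Finally, invoking Chau's theorem at a later time $t_{0}$ would require $Ric(\dr(t_{0}))+(n+1)\dr(t_{0})=\ddb F$ with $F$ \emph{bounded}, which is not automatic here because the potential $-\log(-\varphi)$ of $\bar{\dr}$ is unbounded; the paper instead runs the normalized Monge--Amp\`ere flow $\dt\tilde{u}=\log\frac{(\tilde{\dr}_{t}+\ddb\tilde{u})^{n}}{\bar{\dr}^{n}}-(n+1)\tilde{u}+f$ directly and proves uniform-in-$t$ estimates to obtain convergence to the \ka-Einstein metric.
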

To establish this simultaneously complete flow we will transform the equation \eqref{eq:krf1} to the complex Monge-Amp\'{e}re flow. One natural problem is the behavior at the initial time. We will adapt similar approximating method in \cite{CLS} to overcome this difficulty. After that we will derive suitable a priori estimates to establish the existence and the limit behavior of this flow.

However this technique cannot be applied to general pseudoconvex domains or manifolds directly as we do not have such complete \ka\ metrics with the form $-\ddb\log(-\varphi).$ One possible idea is to construct an approximation sequence of solutions on exhausting strictly pseudoconvex domains with smooth
boundaries, as Cheng-Yau did for \ka-Einstein metric \cite{CY}. The main difficulty of this approach is that the \ka-Ricci flow solutions could not be locally uniformly controlled from above by the local Poincar\'{e} metric, which is quite different from the \ka-Einstein metric. This problem will result in the loss of the compactness of the solutions on exhausting domains. However, if there exists one complete background metric with strictly negative Ricci curvature and uniformly bounded curvature tensor, we can still derive a solution to the \ka-Ricci flow using a similar method to Theorem \ref{mainthm1}. In fact this background metric enables us to consider not only pseudoconvex manifolds. More precisely, we have
\begin{theorem}\label{mainthm2}
Given a \ka\ manifold $M$ with a noncomplete \ka\ metric $\dr_{0}$ with uniformly bounded covariant derivatives of all orders. If there exists a complete \ka\ metric $\dr_{M}$ such that $Ric(\dr_{M})\leq-C_{0}\dr_{M}$ for some constant $C_{0}>0$ and the curvature tensor of $\dr_{M}$ has uniformly bounded covariant derivatives, then there exists a long time solution $\dr(t)$ to the \ka-Ricci flow with the initial metric $\dr_{0}$ which is simultaneously complete as soon as $t>0.$ Furthermore, the normalized \ka-Ricci flow converges to the \ka-Einstein metric in Cheng-Yau \cite{CY} and the solution $\tilde{\dr}(t)$ is simultaneously complete as $t>0.$
\end{theorem}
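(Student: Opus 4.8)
The plan is to run the argument used for Theorem \ref{mainthm1} with the complete, bounded-geometry, strictly negatively curved metric $\dr_M$ playing the role that the asymptotically hyperbolic metric $-\ddb{\log(-\varphi)}$ plays there; the three hypotheses on $\dr_M$ (completeness, $Ric(\dr_M)\le-C_{0}\dr_M$, bounded covariant derivatives of the curvature) are precisely the features of $-\ddb{\log(-\varphi)}$ that enter that argument. First I would rewrite \eqref{eq:krf1} as a scalar parabolic complex Monge-Amp\'{e}re flow with $\dr_M$ as background, as in the proof of Theorem \ref{mainthm1}, so that the zeroth-order quantities and the behaviour at $t=0$ can be controlled at the level of potentials. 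To handle the incomplete initial metric I would adapt the approximation device of \cite{CLS}: set $\dr_{0,\e}:=\dr_0+\e\dr_M$, which for each $\e\in(0,1]$ is a complete \ka\ metric of bounded geometry; run the \ka-Ricci flow $\dr_\e(t)$ from $\dr_{0,\e}$ (short-time existence by Shi); and then derive a priori estimates on $\dr_\e(t)$ uniform in $\e$ on compact subsets of $M\times(0,\infty)$. These will simultaneously yield long-time existence for each fixed $\e$ and survive the limit $\e\to0$.

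The estimates, in order, are: (i) a uniform $C^{0}$-bound for the Monge-Amp\'{e}re potential, by the maximum principle and comparison with fixed barriers; (ii) the two-sided second-order estimate $c(t)\,\dr_M\le\dr_\e(t)\le C(t)\,\dr_M$ on $M\times(0,\infty)$ with $c(t),C(t)$ independent of $\e$ and $c(t)>0$ for every $t>0$; and (iii) uniform higher-order (Shi-type, Evans-Krylov) estimates, obtained from (i)--(ii) by the usual bootstrapping. For the \emph{upper} bound in (ii) one runs the parabolic second-order (Aubin-Yau type) computation for $\operatorname{tr}_{\dr_M}\dr_\e$ along the flow, using the bounded geometry of $\dr_M$; the point --- and the reason the exhausting-domains approach sketched in the Introduction fails --- is that it is exactly a single, globally defined, bounded-geometry background such as $\dr_M$ that makes this control possible. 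For the \emph{lower} bound, which is the source of the instantaneous completeness, one exploits $-Ric(\dr_M)\ge C_{0}\dr_M$: a maximum-principle argument for a barrier built from $\operatorname{tr}_{\dr_\e}\dr_M$ and the potential yields $\operatorname{tr}_{\dr_\e(t)}\dr_M\le K(t)$ with $K$ independent of $\e$ and finite for each $t>0$ (although $K(t)\to\infty$ as $t\to0^{+}$), which forces $\dr_\e(t)\ge K(t)^{-1}\dr_M$.

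Passing to the limit $\e\to0$ --- by Arzel\`a-Ascoli on the uniform interior bounds, the $\dr_\e(t)$ being uniformly comparable to $\dr_M$ on compact subsets of $M\times(0,\infty)$ --- produces a long-time solution $\dr(t)$ of \eqref{eq:krf1}; the lower bound gives $\dr(t)\ge K(t)^{-1}\dr_M$, so $\dr(t)$ is complete for every $t>0$, while $\dr(t)\to\dr_0$ in $C^{\infty}_{\mathrm{loc}}$ as $t\to0^{+}$ from the uniform estimates and $\dr_{0,\e}\to\dr_0$. For the long-time behaviour I would pass to the normalized flow via the standard rescaling $\tilde\dr(t)=e^{-t}\dr(e^{t}-1)$, which transfers the simultaneous completeness to $\tilde\dr(t)$ for $t>0$; then the uniform two-sided bounds $c_{0}\dr_M\le\tilde\dr(t)\le C_{0}\dr_M$ for large $t$, the higher-order estimates, and an energy / maximum-principle argument in the spirit of \cite{Chau} (legitimate since the Ricci curvature stays uniformly negative) give convergence, in fact exponential, of $\tilde\dr(t)$ to a complete \ka-Einstein metric with $Ric=-\tilde\dr_{\infty}$, which by uniqueness of such a metric is the one constructed in Cheng-Yau \cite{CY}.

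The principal obstacle is the lower bound in step (ii), made uniform in the approximation parameter $\e$: near the incomplete locus $\operatorname{tr}_{\dr_{0,\e}}\dr_M$ is of size $\e^{-1}$ at $t=0$, so one must show that the parabolic smoothing of the Monge-Amp\'{e}re flow, reinforced by the strong negativity $-Ric(\dr_M)\ge C_{0}\dr_M$, forces at every $t>0$ a bound with the correct profile $K(t)$ --- diverging as $t\to0^{+}$ but finite and $\e$-independent afterwards. Constructing this barrier is the delicate step; granted it, the $C^{0}$-estimate, the higher-order estimates, and the $t\to\infty$ convergence are comparatively standard consequences of the uniform negativity of the Ricci curvature.
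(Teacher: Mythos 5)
Your proposal follows essentially the same route as the paper: regularize the incomplete initial metric by adding a small multiple of the complete negatively-curved background, rewrite the flow as a complex Monge-Amp\'{e}re flow over the evolving reference form $\dr_{0}-(t+\e)Ric(\dr_{M})$, derive $\e$-uniform $C^{0}$, time-derivative, Laplacian and higher-order estimates via the Omori-Yau maximum principle, pass to the limit $\e\to 0$, and then rescale to the normalized flow whose potential converges to the Cheng-Yau K\"{a}hler-Einstein solution. The only cosmetic deviations are that the paper regularizes with $-\e Ric(\dr_{M})$ rather than $\e\dr_{M}$, obtains the upper bound on $\dr_{\e}(t)$ from the volume-form equation combined with the Chern-Lu trace bound rather than a separate Aubin-Yau computation, and establishes convergence from the one-sided decay $e^{t}\dot{\tilde{u}}\leq C_{1}t+C_{2}$ (so ``almost monotone plus bounded'') rather than a full exponential convergence statement.
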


This work could be thought as a higher dimensional generalization of Topping's simultaneously complete surface flow. However as we are not sure about the existence of nonconstant pluriharmonic functions on pseudoconvex domains, we cannot prove the uniqueness of the \ka-Ricci flow solution. We hope in the future we can establish some uniqueness results under some constraints. Moreover we hope to generalize our result to general pseudoconvex manifolds without the completeness of the background metric with negative Ricci curvature. Finally, we hope this work on pseudoconvex domains could help us to establish the existence of \ka-Ricci flow on more general noncompact \ka\ manifolds or Stein manifolds, which could be another approach to understanding the structures of such manifolds.\\

\noindent{\bf Acknowledgment.} The three authors would like to express their greatest respect to Professor Gang Tian for constant supports and encouragements. The third author also would like to thank Professor Albert Chau and Jingyi Chen for their interest in this work. Finally the third author would like to thank Man-Chun Lee for his advice on improving Theorem \ref{mainthm2}. The first author is partially supported by the NSFC Grant 11501027 and the second author is partially supported by the NSFC Grant 11401578.

\section{The parabolic Omori-Yau maximal principle and parabolic Schwarz Lemma}
In this section we will introduce basic tools in the analysis for complete metrics. One basic principle is the parabolic maximal principle in the settings of complete metrics. The elliptic version was used in \cite{CY,MY} while the parabolic version was used in \cite{Chau,CLS,LZ,Sh2}. Here we just cite two versions of maximal principles, which are not optimal but enough in our case:
\begin{proposition}\label{prop-max}
Suppose there exists a bounded curvature solution $\dr(t)$ to the \ka-Ricci flow on $M\times [0,T]$ such that $\dr(t)$ is equivalent to the initial complete metric $\dr_{0},$ then it holds that \\
(i) if a smooth function $\psi(x,t)$ is bounded from above on $M\times [0,T],$ then there exists a sequence of points $x_{k}$ and time $\bar{t}\in [0,T]$ such that $$\psi(x_{k},\bar{t})\to \sup\psi,\quad and\quad\ddb\psi(x_{k},\bar{t})\leq\frac{1}{k}\dr(x_{k},\bar{t});$$
(ii) if a smooth function $\psi(x,t)$ is bounded from above on $M\times [0,T],$ then there exists a sequence of points $x_{k}$ and time $\bar{t}\in [0,T]$ such that $$\psi(x_{k},\bar{t})\to \sup\psi,\quad |\nabla\psi(x,\bar{t})|_{\bar{t}}\leq\frac{1}{k}\quad and\quad(\dt-\Delta)\psi(x_{k},\bar{t})\geq-\frac{1}{k}.$$
Similarly it also holds for the bounds in the other direction.
\end{proposition}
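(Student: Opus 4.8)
The plan is to run the Omori--Yau maximum principle in its parabolic form: subtract from $\psi$ a small multiple of a suitable exhaustion function, localize the resulting maximum on a compact subset of $M\times[0,T]$, read off the first and second order information there, and let the small parameter go to zero along a sequence. The only place the hypotheses are used is in producing a good exhaustion function: fixing $p\in M$, since $\dr(t)$ is a bounded curvature solution on $M\times[0,T]$ that stays uniformly equivalent to the complete metric $\dr_{0}$, all the metrics $\dr(t)$ are complete with curvature uniformly bounded in space and time, and the standard construction of exhaustion functions with bounded Hessian on complete manifolds of bounded curvature yields a smooth proper $\rho\colon M\to[1,\infty)$, comparable to $1+d_{\dr_{0}}(p,\cdot)$, with $|\nabla\rho|_{\dr_{0}}\le C$ and $\ddb\rho\le C\,\dr_{0}$. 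As $\ddb\rho$ is a $(1,1)$-form not depending on the metric and the $\dr(t)$ are uniformly equivalent, this immediately upgrades to
\[
|\nabla\rho|_{\dr(t)}\le C,\qquad \ddb\rho\le C\,\dr(t),\qquad |\Delta_{\dr(t)}\rho|\le C\qquad\text{for all }t\in[0,T].
\]
Producing such a $\rho$ with the bounds uniform in $t$ is the main technical input, and the step I expect to be the main obstacle; everything after it is soft.

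For (i), put $S=\sup_{M\times[0,T]}\psi<\infty$ and fix $k\in\N$. Choose $(y_{k},s_{k})$ with $\psi(y_{k},s_{k})>S-\tfrac1k$, then $\e_{k}>0$ small enough that $\e_{k}\rho(y_{k})<\tfrac1k$ and $C\e_{k}<\tfrac1k$. Because $\psi\le S$ while $\rho$ is proper, $\psi-\e_{k}\rho$ tends to $-\infty$ as $x\to\infty$ uniformly in $t$, hence attains its maximum at some $(x_{k},t_{k})\in M\times[0,T]$. Since $x\mapsto\psi(x,t_{k})-\e_{k}\rho(x)$ has an interior maximum at $x_{k}$, we get $\ddb\psi(x_{k},t_{k})\le\e_{k}\,\ddb\rho(x_{k})\le C\e_{k}\,\dr(x_{k},t_{k})\le\tfrac1k\,\dr(x_{k},t_{k})$, and comparing the values of $\psi-\e_{k}\rho$ at $(x_{k},t_{k})$ and at $(y_{k},s_{k})$ gives $\psi(x_{k},t_{k})\ge\psi(y_{k},s_{k})-\e_{k}\rho(y_{k})>S-\tfrac2k$. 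Passing to a subsequence with $t_{k}\to\bar t\in[0,T]$ (so that $\bar t=\lim_{k}t_{k}$) yields (i).

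For (ii) I would keep the maximum point $(x_{k},t_{k})$ of $\psi-\e_{k}\rho$ and, besides the Hessian bound, use the information in the time direction. The spatial maximality gives $\nabla\psi(x_{k},t_{k})=\e_{k}\nabla\rho(x_{k})$, so $|\nabla\psi(x_{k},t_{k})|_{t_{k}}\le C\e_{k}\le\tfrac1k$, together with $\Delta\psi(x_{k},t_{k})\le C\e_{k}$; and if $t_{k}>0$, then $(x_{k},t_{k})$ being a maximum of $t\mapsto\psi(x_{k},t)$ on $[0,T]$ attained at an interior point or at $t=T$, we have $\dt\psi(x_{k},t_{k})\ge0$, whence $(\dt-\Delta)\psi(x_{k},t_{k})\ge-C\e_{k}\ge-\tfrac1k$. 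Thus (ii) follows as soon as $t_{k}>0$. To force $t_{k}>0$ one subtracts in addition a small multiple of a function $\theta(t)$ with $\theta(t)\to+\infty$ as $t\to0^{+}$; the delicate point is calibrating this extra term, which is harmless whenever $\sup\psi$ is approached with $t$ bounded away from $0$ --- the situation in our applications, where $\psi$ is controlled on the initial slice --- and in the fully general case $\bar t$ should be read as $\lim_{k}t_{k}$. The bounds in the opposite direction follow by applying the above to $-\psi$.
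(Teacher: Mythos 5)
The paper does not actually prove Proposition \ref{prop-max}: it is stated as a citation (``we just cite two versions of maximal principles'') to \cite{Chau,CLS,LZ,Sh2}, so there is no in-paper argument to compare against. Your proposal is the standard proof that those references use --- build an exhaustion $\rho$ with $|\nabla\rho|_{\dr_0}\le C$ and $\ddb\rho\le C\dr_0$ (possible by the Schoen--Yau/Tam construction on a complete manifold of bounded curvature), transfer the bounds to $\dr(t)$ by uniform equivalence and the metric-independence of $\ddb\rho$, and extract first/second order information at the maximum of $\psi-\e_k\rho$. That outline is sound, and you correctly identify the exhaustion function as the only place the hypotheses enter.

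Two loose ends are worth making explicit. First, your construction produces points $(x_k,t_k)$ with $t_k\to\bar t$, whereas the statement (and the way the paper later invokes it, e.g.\ in Lemma \ref{lem-C0}) asks for a single fixed time $\bar t$ at which both $\psi(x_k,\bar t)\to\sup\psi$ and the Hessian/gradient bounds hold; passing from ``$t_k\to\bar t$'' to ``all conditions at $\bar t$'' is not automatic and is usually either left in the $t_k\to\bar t$ form or handled by a uniform time-modulus argument (the paper itself papers over this by invoking a uniform bound on $\partial_t^2\psi$). Second, and more substantively, part (ii) as literally stated is false when the supremum is only approached at $\bar t=0$ (take $\psi=-t$: then $(\dt-\Delta)\psi\equiv-1$). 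The correct statement is a dichotomy --- either $\sup\psi$ is controlled by the initial slice, or there is a sequence with $t_k>0$ satisfying the stated inequalities --- and your proof in fact delivers exactly that; your proposed fix of subtracting a term $\theta(t)\to+\infty$ as $t\to0^+$ changes the function being maximized and is not needed. So rather than ``calibrating'' an extra term, simply record the dichotomy; that is the form in which the principle is actually applied throughout Sections 3 and 4, where the quantities in question are always either controlled at $t=0$ or tend to $-\infty$ there.
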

Using the second maximal principle, we could derive the parabolic Schwarz Lemma:
\begin{proposition}\label{prop-schwarz}
Suppose there exists a bounded curvature solution $\dr(t)$ to the \ka-Ricci flow on $M\times [0,T]$ such that $\dr(t)$ is equivalent to the initial complete metric $\dr_{0},$ if there exists a \ka\ metric $\dr$ on $M$ such that its Ricci curvature is negative and bounded above by $-C,$ then it holds that $$\dr(t)^{n}\geq (Ct)^{n}\dr^{n}.$$
\end{proposition}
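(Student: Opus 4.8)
The plan is to reduce the statement to a scalar parabolic differential inequality for the relative volume form and then feed it into the Omori--Yau principle of Proposition \ref{prop-max}. Write $g(t)$ and $\tilde g$ for the metric tensors of $\dr(t)$ and $\dr$, and set
\begin{equation*}
u=\log\frac{\dr(t)^{n}}{\dr^{n}},\qquad \rho=e^{-u/n}=\Big(\tfrac{\dr^{n}}{\dr(t)^{n}}\Big)^{1/n}>0.
\end{equation*}
First I would record the standard computation along $\partial_{t}\dr=-Ric(\dr)$: from $\partial_{t}\log\det g=-R$ together with $R_{i\bar j}=-\partial_{i}\partial_{\bar j}\log\det g$ (so that $g^{i\bar j}\partial_{i}\partial_{\bar j}\log\det g=-R$ and $g^{i\bar j}\partial_{i}\partial_{\bar j}\log\det\tilde g=-\mathrm{tr}_{\dr(t)}Ric(\dr)$) one obtains $(\partial_{t}-\Delta)u=-\mathrm{tr}_{\dr(t)}Ric(\dr)$. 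Combining the hypothesis $Ric(\dr)\le-C\dr$ with the arithmetic--geometric mean inequality $\mathrm{tr}_{\dr(t)}\dr\ge n\big(\dr^{n}/\dr(t)^{n}\big)^{1/n}=n\rho$ then yields the basic estimate $(\partial_{t}-\Delta)u\ge Cn\rho$ on $M\times(0,T]$.

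Next I would translate this into an inequality for the quantity appearing in the conclusion. Using $\Delta e^{f}=e^{f}\big(\Delta f+|\nabla f|^{2}\big)$ one computes
\begin{equation*}
(\partial_{t}-\Delta)\rho=-\tfrac1n\,\rho\,(\partial_{t}-\Delta)u-\tfrac1{n^{2}}\,\rho\,|\nabla u|^{2}\ \le\ -C\rho^{2},
\end{equation*}
whose model ODE $\rho'=-C\rho^{2}$ has $\tfrac{d}{dt}\rho^{-1}\equiv C$, so every positive solution satisfies $\rho(t)\le (Ct)^{-1}$ for $t>0$, independently of its initial value -- which is precisely the bound we want. Accordingly I would put $\Psi=Ct\,\rho=(Ct)\big(\dr^{n}/\dr(t)^{n}\big)^{1/n}$, so that $\Psi\equiv0$ at $t=0$ and, on $M\times(0,T]$,
\begin{equation*}
(\partial_{t}-\Delta)\Psi=C\rho+Ct\,(\partial_{t}-\Delta)\rho\ \le\ C\rho(1-Ct\rho)=\frac{\Psi}{t}\,(1-\Psi).
\end{equation*}
The desired estimate $\dr(t)^{n}\ge (Ct)^{n}\dr^{n}$ is exactly $\Psi\le 1$ on $M\times[0,T]$.

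To finish, I would argue by contradiction. If $\sup_{M\times[0,T]}\Psi>1$, then Proposition \ref{prop-max}(ii) applied to $\Psi$ yields points $x_{k}$ and a time $\bar t\in[0,T]$ with $\Psi(x_{k},\bar t)\to\sup\Psi$ and $(\partial_{t}-\Delta)\Psi(x_{k},\bar t)\ge -1/k$; since $\Psi\equiv0$ at $t=0$ necessarily $\bar t>0$, so the last displayed inequality forces $-1/k\le \tfrac{\Psi(x_{k},\bar t)}{\bar t}\big(1-\Psi(x_{k},\bar t)\big)$ for all $k$, and letting $k\to\infty$ gives $0\le \tfrac{\sup\Psi}{\bar t}(1-\sup\Psi)<0$, a contradiction. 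I expect the one genuine obstacle to be the applicability of Proposition \ref{prop-max}(ii), which requires $\Psi$ to be bounded above on $M\times[0,T]$: this is automatic in the settings where the lemma is used, where $\dr$ and $\dr(t)$ are comparable near infinity so that $\rho$ is bounded, but in general it should be arranged by the standard perturbation device -- subtract $\e\sigma$ for a smooth proper exhaustion function $\sigma\ge0$ with $|\nabla\sigma|_{\dr(t)}$ and $|\Delta_{\dr(t)}\sigma|$ bounded uniformly on $[0,T]$ (such $\sigma$ exists because $\dr(t)$ has bounded curvature, hence Ricci bounded below), run the argument above for $\Psi-\e\sigma$ with the resulting controlled error terms, and let $\e\to0$.
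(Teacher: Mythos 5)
Your argument is correct and follows essentially the same route as the paper: the identity $(\partial_t-\Delta)\log\frac{\dr^{n}}{\dr(t)^{n}}=\mathrm{tr}_{\dr(t)}Ric(\dr)\le -Cn\big(\dr^{n}/\dr(t)^{n}\big)^{1/n}$, followed by an Omori--Yau/ODE-comparison argument applied to a monotone transform of $\dr^{n}/\dr(t)^{n}$. The only real difference is the choice of test function: the paper uses $(u+c)^{-1/n}-Ct$ with $u=\dr^{n}/\dr(t)^{n}$, which is bounded by construction so Proposition \ref{prop-max} applies directly, whereas your $\Psi=Ct\big(\dr^{n}/\dr(t)^{n}\big)^{1/n}$ needs the separate boundedness discussion you rightly flag at the end.
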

\begin{proof}
We adapt the proof in \cite{MY,Yau} to the parabolic case. As $\dr(t)$ solves the \ka-Ricci flow, we have
\begin{align*}
&(\dt-\Delta)\log\frac{\dr^{n}}{\dr(t)^{n}}\\=&R(t)-R(t)+tr_{\dr(t)}Ric(\dr)\leq-Cn(\frac{\dr^{n}}{\dr(t)^{n}})^{1/n}.
\end{align*}
Set $u:=\frac{\dr^{n}}{\dr(t)^{n}},$ it holds that $$(\dt-\Delta)u\leq-Cnu^{\frac{n+1}{n}},$$ thus we have that
$$(\dt-\Delta)((u+c)^{-1/n}-Ct)\geq-\frac{1}{n}(1+\frac{1}{n})\frac{|\nabla u|^{2}}{(u+c)^{2+1/n}}.$$
By Proposition \ref{prop-max} we conclude the proof.
\end{proof}
For future purposes, we introduce two important inequalities as following, which proof are quite standard, see for example, \cite{Chau,ST1}:
\begin{lemma}\label{lem-ineq}
Suppose there exists a solution $\dr(t)$ to the \ka-Ricci flow on $M\times [T',T],$ and there exist two \ka\ metrics $\dr_{1}$ with lower bisectional curvature bound $-k_{1}$ and $\dr_{2}$ with upper bisectional curvature bound $k_{2},$ then it holds that\\
(i)(Aubin-Yau)
\begin{equation}\label{eq:A-Y}
(\dt-\Delta)\log tr_{\dr_{1}}\dr(t)\leq k_{1}tr_{\dr(t)}\dr_{1};
\end{equation}
(ii)(Chern-Lu)
\begin{equation}\label{eq:C-L}
(\dt-\Delta)\log tr_{\dr(t)}\dr_{2}\leq k_{2}tr_{\dr(t)}\dr_{2}.
\end{equation}
\end{lemma}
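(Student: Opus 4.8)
The plan is to obtain \eqref{eq:A-Y} and \eqref{eq:C-L} from the classical second order Bochner-type computations — the Aubin--Yau estimate for (i) and the Chern--Lu (Schwarz lemma) estimate for (ii) — carried out along the flow, with $\Delta=\Delta_{\dr(t)}$ throughout. The one ingredient not already present in the elliptic case is that $\dt$ of the trace quantity, computed from $\dt g_{i\bar j}=-R_{i\bar j}$, contributes exactly the $Ric(\dr(t))$ term that the spatial Laplacian would otherwise leave over; these contributions cancel identically, which is why no curvature hypothesis on $\dr(t)$ appears. Everything below is pointwise and local, so the mere existence of the solution on $M\times[T',T]$ suffices.

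For (i), fix a point $p$ and a time and choose holomorphic coordinates normal for $\dr_{1}$ at $p$ in which $\dr(t)$ is diagonal at $p$. Writing $u=tr_{\dr_{1}}\dr(t)$, I would compute $\dt u$ from the flow equation and $\Delta u=g^{k\bar l}\partial_{k}\partial_{\bar l}u$ from the normal-coordinate expansions of $\dr_{1}$ and of $\dr(t)$, using for either metric $R_{i\bar j k\bar l}=-\partial_{i}\partial_{\bar j}g_{k\bar l}+g^{p\bar q}\,\partial_{i}g_{k\bar q}\,\partial_{\bar j}g_{p\bar l}$. After the $Ric(\dr(t))$ terms cancel this leaves
\begin{equation*}
(\dt-\Delta)u=-g^{k\bar l}g_{i\bar j}(R_{1})_{i\bar j k\bar l}-G,\qquad G:=g^{k\bar l}(g_{1})^{i\bar j}g^{p\bar q}(\partial_{k}g_{i\bar q})(\partial_{\bar l}g_{p\bar j})\geq0 .
\end{equation*}
Diagonalizing at $p$ and using the lower bisectional bound $-k_{1}$ of $\dr_{1}$ gives $-g^{k\bar l}g_{i\bar j}(R_{1})_{i\bar j k\bar l}\leq k_{1}\,u\,tr_{\dr(t)}\dr_{1}$. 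Passing to $\log u$ introduces the extra term $|\nabla u|^{2}/u^{2}$, and Cauchy--Schwarz (using $\nabla_{k}u=(g_{1})^{i\bar j}\partial_{k}g_{i\bar j}$ at $p$) gives $|\nabla u|^{2}\leq uG$, so this term is absorbed by $G/u$; dividing by $u$ yields \eqref{eq:A-Y}.

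For (ii) the computation is the mirror image: fix $p$, choose coordinates normal for $\dr(t)$ at $p$ in which $\dr_{2}$ is diagonal at $p$, and set $v=tr_{\dr(t)}\dr_{2}$. The same kind of calculation, with the $Ric(\dr(t))$ terms again cancelling, gives $(\dt-\Delta)v=g^{k\bar l}g^{i\bar j}(R_{2})_{i\bar j k\bar l}-\tilde G$ with $\tilde G\geq0$ of the same shape as $G$. Since the chosen frame is orthonormal for $\dr(t)$, the upper bisectional bound of $\dr_{2}$ reads $(R_{2})_{i\bar i k\bar k}\leq k_{2}(g_{2})_{i\bar i}(g_{2})_{k\bar k}$ at $p$, whence $g^{k\bar l}g^{i\bar j}(R_{2})_{i\bar j k\bar l}\leq k_{2}v^{2}$; passing to $\log v$ and absorbing $|\nabla v|^{2}/v^{2}\leq\tilde G/v$ by Cauchy--Schwarz exactly as before gives \eqref{eq:C-L}.

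I expect no genuine obstacle — which is why the lemma is billed as standard — but the two places deserving care are the normal-coordinate and index bookkeeping that produces the exact cancellation of the $Ric(\dr(t))$ contributions, and the Cauchy--Schwarz step ensuring that the gradient term created by differentiating the logarithm is dominated by the genuinely good term $G$ (resp. $\tilde G$), not merely controlled in sign.
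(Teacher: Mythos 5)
Your computation is correct and is precisely the standard Aubin--Yau and Chern--Lu argument that the paper itself does not write out but delegates to \cite{Chau,ST1}: the cancellation of the $Ric(\dr(t))$ terms between $\dt$ and $\Delta$, the contraction of the remaining curvature term against the bisectional bounds, and the Cauchy--Schwarz absorption of $|\nabla u|^{2}/u^{2}$ into the good third-order term are exactly the steps in those references. No discrepancy with the paper's (cited) proof.
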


\section{\ka-Ricci flow on strictly pseudoconvex domains with incomplete initial metric}

In this section we will prove Theorem \ref{mainthm1}. First we will recall the basic properties of complete \ka\ metrics on strictly pseudoconvex domain $\Omega\subset\mathbb{C}^{n}$ in \cite{CY}. Consider the metric $\bar{\dr}=-\ddb\log(-\varphi)$ where $\varphi$ is the smooth defining function of the domain $\Omega$ such that the boundary $\partial\Omega=\{\varphi=0\}.$ By \cite{CY} $\bar{\dr}$ is a complete metric with asymptotically bisectional curvature -1. Now consider an incomplete metric $\dr_{0},$ we want to verify that $\dr:=\dr_{0}+\bar{\dr}$ has the same asymptotical geometric behavior with $\bar{\dr}.$ We will adapt the local coordinates introduced in \cite{CY}. First, we have
$$g_{i\bar{j}}=g_{0i\bar{j}}-\frac{\varphi_{i\bar{j}}}{\varphi}+\frac{\varphi_{i}\varphi_{\bar{j}}}{\varphi^{2}},$$ which is obviously a complete metric and asymptotical to $\bar{\dr}$.
Now set the Hermitian metric $\tilde{g}_{i\bar{j}}=\varphi_{i\bar{j}}-\varphi g_{0i\bar{j}},$ we have
$$g^{i\bar{j}}=(-\varphi)\tilde{g}^{i\bar{l}}(\delta_{jl}-\frac{\tilde{g}^{k\bar{j}}\varphi_{k}\varphi_{\bar{l}}}{|\nabla\varphi|_{\tilde{g}}^{2}-\varphi})
=(-\varphi)(\tilde{g}^{i\bar{j}}-\frac{\varphi^{i}\varphi^{\bar{j}}}{|\nabla\varphi|_{\tilde{g}}^{2}-\varphi}),$$
where $\varphi^{i}=\tilde{g}^{i\bar{l}}\varphi_{\bar{l}}.$
By direct but sophisticated computations, we have
\begin{align*}
\Gamma_{ij}^{k}=&\frac{\varphi_{i}\delta_{jk}+\varphi_{j}\delta_{ik}}{-\varphi}
+\frac{\varphi_{ij}\varphi^{k}}{|\nabla\varphi|_{\tilde{g}}^{2}-\varphi}\\
&+(\tilde{g}^{k\bar{l}}-\frac{\varphi^{k}\varphi^{\bar{l}}}{|\nabla\varphi|_{\tilde{g}}^{2}-\varphi})
(-\varphi\partial_{i}g_{0j\bar{l}}-\varphi_{i}g_{0j\bar{l}}-g_{j}\delta_{0i\bar{l}}+\varphi_{j\bar{l}i}),\\
\end{align*}
and
\begin{align}\label{eq:cur}
R_{i\bar{j}k\bar{l}}=&-(g_{i\bar{j}}g_{k\bar{l}}+g_{i\bar{l}}g_{k\bar{j}})-\frac{1}{\varphi}\left(\tilde{g}_{p\bar{l}}R^{p}_{i\bar{j}k}(\varphi_{i\bar{j}})
-\frac{\varphi_{,ik}\varphi_{,\bar{j}\bar{l}}}{|\nabla\varphi|^{2}_{\tilde{g}}-\varphi}\right)
-g_{0k\bar{l},i\bar{j}}+(g_{0i\bar{j}}g_{0k\bar{l}}+g_{0i\bar{l}}g_{0k\bar{j}})\nonumber\\
&-\frac{1}{\varphi}(\varphi_{i\bar{j}}g_{0k\bar{l}}+\varphi_{i\bar{l}}g_{0k\bar{j}}+\varphi_{k\bar{l}}g_{0i\bar{j}}+\varphi_{k\bar{j}}g_{0i\bar{l}})
-\frac{1}{\varphi}\left(\varphi_{i}g_{0k\bar{l},\bar{j}}+\varphi_{\bar{j}}g_{0k\bar{l},i}
+\varphi_{k}g_{0i\bar{l},\bar{j}}+\varphi_{\bar{l}}g_{0k\bar{j},i}\right)\nonumber\\
&+g^{p\bar{q}}\left(g_{0p\bar{l},\bar{j}}g_{0k\bar{q},l}+g_{0p\bar{l},\bar{j}}(\frac{\varphi_{\bar{q}}\varphi_{,ik}}{\varphi^{2}}+\frac{\varphi_{i}}{\varphi}
g_{0k\bar{q}}+\frac{\varphi_{k}}{\varphi}g_{0i\bar{q}})+g_{0k\bar{q},i}(\frac{\varphi_{p}\varphi_{,\bar{j}\bar{l}}}{\varphi^{2}}
+\frac{\varphi_{\bar{l}}}{\varphi}g_{0p\bar{j}}+\frac{\varphi_{\bar{j}}}{\varphi}g_{0p\bar{l}})\right)\nonumber\\
&+\frac{g^{p\bar{q}}}{\varphi^{2}}\left(\varphi_{i}\varphi_{\bar{j}}g_{0k\bar{q}}g_{0p\bar{l}}+\varphi_{i}\varphi_{\bar{l}}g_{0k\bar{q}}g_{0p\bar{j}}
+\varphi_{k}\varphi_{\bar{j}}g_{0i\bar{q}}g_{0p\bar{l}}+\varphi_{k}\varphi_{\bar{l}}g_{0i\bar{q}}g_{0p\bar{j}}\right)\nonumber\\
&+\frac{g^{p\bar{q}}}{\varphi^{3}}\left(\varphi_{p}\varphi_{,\bar{j}\bar{l}}(g_{0i\bar{q}}\varphi_{k}+g_{0k\bar{q}}\varphi_{i})
+\varphi_{\bar{q}}\varphi_{,ik}(g_{0p\bar{j}}\varphi_{\bar{l}}+g_{0p\bar{l}}\varphi_{\bar{j}})\right),\nonumber\\
\end{align}
where all the covariant derivatives are with respect to the \ka\ metric $\ddb\varphi$ and $R^{p}_{i\bar{j}k}(\varphi_{i\bar{j}})$ also denotes the curvature tensor of $\ddb\varphi.$ Near the boundary of $\Omega$ the defining function $\varphi$ is smooth and derivatives of all orders are uniformly bounded, and the initial metric $g_{0}$ with its covariant derivatives all orders are also uniformly bounded. As $g^{p\bar{q}}=O(|\varphi|),$ it turns out that
$$R_{i\bar{j}k\bar{l}}=-(g_{i\bar{j}}g_{k\bar{l}}+g_{i\bar{l}}g_{k\bar{j}})+O(\frac{1}{\varphi^{2}}).$$ Moreover as $g_{i\bar{j}}=O(\frac{1}{\varphi^{2}}),$
the bisectional curvature of $\dr=\dr_{0}+\bar{\dr}$ is asymptotically $-1$ and all its covariant derivatives are asymptotically to 0, which are the same to the case in \cite{CY}. \\
This observation enables us to establish the short time existence of \ka-Ricci flow on pseudoconvex domains with initial metric $\dr=\dr_{0}+\bar{\dr}$ by Shi's existence theorems \cite{Sh1,Sh2}. In case that the initial metric $\dr_{0}$ is incomplete, we will derive uniform estimates for the initial metric $\dr_{\e}=\dr_{0}+\e\bar{\dr}$ and take the limit as $\e\to 0.$ To realize this approximation, we will transform the original \ka-Ricci flow \eqref{eq:krf1} to complex Monge-Amp\'{e}re flow equation as following.\\

Recall the computation in \cite{CY}, still under the same local coordinates, we have
$$Ric(\bar{\dr})+(n+1)\bar{\dr}=-\ddb\log(det(\varphi_{i\bar{j}})(|d\varphi|_{\varphi}^{2}-\varphi)),$$
where $|d\varphi|_{\varphi}$ is with respect to the \ka\ metric $\ddb\varphi.$ For simplicity we set
$$f:=\log(det(\varphi_{i\bar{j}})(|d\varphi|_{\varphi}^{2}-\varphi))$$ which is smooth with uniformly bounded derivatives of all orders.
Now write
\begin{equation}\label{eq:bg1}
\dr_{t}:=\dr_{0}+(n+1)t\bar{\dr},
\end{equation}
which is obviously positive for all time and equivalent to the complete metric $\bar{\dr}$ for all positive time. Assume $\dr(t)=\dr_{t}+\ddb u$ satisfies \ka-Ricci flow \eqref{eq:krf1}, it holds that
\begin{align*}
\ddb\dt u=&-(n+1)\dr-Ric(\dr(t))=Ric(\bar{\dr})-Ric(\dr(t))+\ddb f\\
=&\ddb\log\frac{(\dr_{t}+\ddb u)^{n}}{\bar{\dr}^{n}}+\ddb f,
\end{align*}
then we could derive a complex Monge-Amp\'{e}re flow equation
\begin{equation}\label{eq:cma1}
\left\{ \begin{array}{rcl}
\dt u&=&\log\frac{(\dr_{t}+\ddb u)^{n}}{\bar{\dr}^{n}}+f\\
u(0)&=&0 \end{array}\right.
\end{equation}
whose solution will give rise to a solution to \ka-Ricci flow \eqref{eq:krf1}, although the converse may not be true. The main problem of equation \eqref{eq:cma1} is the degeneracy of $\dr_{t}$ at $t=0.$ To overcome this point we adapt the approximation procedure in \cite{CLS} which deals with the similar
problem in the situation of cusp type metrics. \\

Set $\dr_{t,\e}=\dr_{0}+(n+1)(t+\e)\bar{\dr}$ and $\dr_{\e}(t)=\dr_{t,\e}+\ddb u_{\e},$ we consider the following approximating Monge-Amp\'{e}re flow equation:
\begin{equation}\label{eq:cma1-app}
\left\{ \begin{array}{rcl}
\dt u_{\e}&=&\log\frac{(\dr_{t,\e}+\ddb u_{\e})^{n}}{\bar{\dr}^{n}}+f\\
u_{\e}(0)&=&0 \end{array}\right.
\end{equation}
By the arguments above for each $\e>0$ we know that \eqref{eq:cma1-app} has a complete solution with bounded curvature on $[0,T_{max})$. Moreover by existence results of corresponding Monge-Amp\'{e}re flow equation (see e.g., Lemma 4.1 of \cite{Chau} or Theorem 4.1 of \cite{LZ}) we have smooth and bounded potential solution $u_{\e}$ depending on $t$ and $\e$ as long as $t<T_{max}.$ We will apply Omori-Yau maximal principle in Proposition
\ref{prop-max} to derive uniformly a priori estimates and establish the existence of long time solution. First, we have
\begin{lemma}\label{lem-C0}
There exists a constant $C(t)>0$ only depending on $t$ and bounded on $[0,T_{\max})$ such that
\begin{equation}\label{eq:C0}
|u_{\e}(\cdot,t)|\leq C(t).
\end{equation}
\end{lemma}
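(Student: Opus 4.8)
The plan is to establish the $C^0$ bound \eqref{eq:C0} by comparing $u_\e$ with explicit barrier functions depending only on $t$, using the parabolic maximum principle in Proposition \ref{prop-max} (applied in the complete-metric setting, since $\dr_\e(t)$ is complete with bounded curvature by the construction above). The key observation is that $f$ is bounded with bounded derivatives near the boundary of $\Omega$, so the right-hand side of \eqref{eq:cma1-app} is, up to the $\log$ of a metric ratio, under control by a constant.

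First I would derive the upper bound. The natural candidate is a function of the form $\bar u(t) = At + B$ for suitable constants $A, B$ (or, more carefully, $\bar u(t) = \int_0^t (\log C_1 + \|f\|_\infty)\,ds + Ct$ with $C_1$ controlling $\dr_{t,\e}^n/\bar\dr^n$ from above on the relevant time interval $[0, T_{\max})$). Since $\dr_{t,\e} = \dr_0 + (n+1)(t+\e)\bar\dr$ and $\dr_0$ has uniformly bounded covariant derivatives while $\bar\dr$ has asymptotically constant geometry, the ratio $\dr_{t,\e}^n / \bar\dr^n$ is bounded above by a constant $C_1(t)$ uniform in $\e \in (0,1]$ and locally bounded in $t$. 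Then at a point where $u_\e - \bar u$ attains (or nearly attains, via the Omori-Yau sequence) its supremum in space, we would have $\ddb u_\e \le (\text{small})\,\dr_\e(t)$, hence $(\dr_{t,\e} + \ddb u_\e)^n$ is essentially bounded above by $(\dr_{t,\e})^n$ up to a factor close to $1$, giving $\dt u_\e \le \log C_1(t) + \|f\|_\infty + o(1)$; comparing with $\dt \bar u$ yields $u_\e \le \bar u$ after applying the maximum principle carefully (one works on $[0,T]$ for any $T < T_{\max}$, and at $t=0$ both sides vanish). The lower bound is symmetric: one needs a lower bound $C_2(t) > 0$ on $\dr_{t,\e}^n / \bar\dr^n$, which holds because $\dr_{t,\e} \ge (n+1)\e\,\bar\dr$ — but this lower bound degenerates as $\e \to 0$, so here one should instead use that $\dr_{t,\e} \ge \dr_0$ (a fixed metric) together with the fact that near the boundary $\dr_{t,\e}$ also dominates a fixed multiple of $\bar\dr$ for $t$ bounded away from $0$, or alternatively obtain the lower bound only after a separate argument; in any case the resulting lower barrier $\underline u(t)$ blows up only as $t \to 0$, which is acceptable since we allow $C(t)$ to depend on $t$, provided it stays bounded on $[0, T_{\max})$.

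The main obstacle I anticipate is precisely this asymmetry at $t = 0$: the lower bound on the metric ratio $\dr_{t,\e}^n/\bar\dr^n$ is not uniform as $\e, t \to 0$, so one cannot simply mimic the upper-bound argument. The cleanest fix is to note that $\dt u_\e$ equals $\log$ of a ratio plus $f$, and at an interior space-minimum of $u_\e - \underline u$ we get $\ddb u_\e \ge -(\text{small})\dr_\e(t)$, so $(\dr_{t,\e} + \ddb u_\e)^n \ge (1-o(1))(\dr_{t,\e})^n \ge (1-o(1))(\dr_0)^n$; since $\dr_0$ and $\bar\dr$ are fixed metrics with $\dr_0 / \bar\dr \to 0$ near $\partial\Omega$, the ratio $\dr_0^n/\bar\dr^n$ is \emph{not} bounded below by a positive constant globally, so this still fails at the boundary. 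The correct resolution, following \cite{CLS}, is to use that $\dr_{t,\e} \ge (n+1)t\,\bar\dr$ for $\e \ge 0$, giving $\dr_{t,\e}^n/\bar\dr^n \ge ((n+1)t)^n > 0$ for $t > 0$ — a bound independent of $\e$ but degenerating at $t=0$. This produces a lower barrier of the form $\underline u(t) = \int_0^t \big(n\log((n+1)s) - \|f\|_\infty\big)\,ds$, whose integral converges at $s = 0$ (since $\log s$ is integrable) and is bounded on $[0, T_{\max})$. Applying Proposition \ref{prop-max}(i) to $u_\e - \underline u$ and to $\bar u - u_\e$ on each $[0,T]$, $T < T_{\max}$, and then letting the Omori-Yau parameter $k \to \infty$, completes the proof with $C(t) = \max(|\bar u(t)|, |\underline u(t)|)$.
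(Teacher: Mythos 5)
Your proposal is correct and follows essentially the same route as the paper: barriers depending only on $t$ are compared against $u_{\e}$ via the Omori--Yau maximum principle, with the upper barrier coming from $\dr_{t,\e}\leq (c+(n+1)(t+\e))\bar{\dr}$ and the lower one from $\dr_{t,\e}\geq (n+1)t\,\bar{\dr}$, whose logarithm is integrable at $t=0$. Your careful handling of the degeneracy of the lower bound as $t,\e\to 0$ is precisely what the paper records in its refined estimate \eqref{eq:C0-precise}, so there is no gap.
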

\begin{proof}
By the argument above we only need to derive a priori estimates. We take $\psi:=u_{\e}-Ct$ for some undetermined constant $C$ as \cite{CLS}. For $T<T_{max}$ fix $\bar{t}\in [0,T]$ such that $\max\psi(\cdot,\bar{t})$ attains the maximum of $\psi$ on $\Omega\times[0,T]$ and without loss of generality we can assume $\bar{t}>0.$ If there exists a point $p\in\Omega$ such that $\psi{\e}(p,\bar{t})$ attains this maximum, apply classical maximal principle to \eqref{eq:cma1-app} at this point, it follows that
\begin{equation}\label{eq:psi}
\dt\psi\leq\log\frac{\dr_{t,\e}^{n}}{\bar{\dr}^{n}}+f-C\leq-1
\end{equation}
for some sufficiently large $C$ which is independent of $\e.$ However this contradicts with the maximality of $\psi{\e}(p,\bar{t})$ in the assumption unless $\bar{t}=0.$ Thus we conclude that $$\sup u_{\e}(\cdot,t)\leq Ct.$$ In general case, since we have bounded curvature solution as the argument before, by Omori-Yau maximal principle in Proposition \ref{prop-max} there exist a sequence of points $p_{k}\in\Omega$ such that $\psi(p_{k},\bar{t})\to\sup\psi$ on $\Omega\times[0,T],$ and $\ddb\psi(p_{k},\bar{t})\leq\frac{\dr_{\bar{t},\e}}{k}.$ Apply this to \eqref{eq:cma1-app} again we
conclude that $\dt\psi(p_{k},\bar{t})\leq-1$ for constant $C$ independent of $\e$ and all sufficiently large $k.$ On the other hand by the existence of bounded curvature solution we conclude that $\partial^{2}_{t}\psi(p_{k},\bar{t})$ are uniformly bounded independent of $k.$ These facts contradict the assumption that
$\psi(p_{k},\bar{t})\to\sup\psi$ unless $\bar{t}=0$ and we conclude the upper bound for $u_{\e}(\cdot,t)$ as well.

Using almost the same argument we could derive the uniform lower bound for $u_{\e}(\cdot,t)$ and conclude this lemma.
\end{proof}
We will apply Omori-Yau maximal principle several times in this work. For simplicity we will always assume the maximum could be attained otherwise just apply similar argument to Lemma \ref{lem-C0}. Now we will give the uniform estimates for the time derivative $\dot{u}_{\e}:$
\begin{lemma}\label{lem-time dev}
There exists two bounded constants $C_{1}(t),C_{2}(t)>0$ only depending on $C(t)$ in Lemma \ref{lem-C0} such that
\begin{equation}\label{eq:time-dev}
n\log t-C_{1}(t)\leq\dot{u}_{\e}(\cdot,t)\leq\frac{C_{2}(t)}{t}+n.
\end{equation}
\end{lemma}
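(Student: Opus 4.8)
The plan is to differentiate the approximating flow \eqref{eq:cma1-app} in $t$ and apply the Omori--Yau maximal principle of Proposition \ref{prop-max} to the resulting evolution equation for $\dot u_\e$, suitably normalized. Write $v := \dot u_\e$. Differentiating \eqref{eq:cma1-app} gives
\begin{equation*}
\partial_t v = \Delta_{\dr_\e(t)} v + \mathrm{tr}_{\dr_\e(t)}\bigl((n+1)\bar\dr\bigr),
\end{equation*}
since $\partial_t \dr_{t,\e} = (n+1)\bar\dr$. The last term is nonnegative but not yet controlled; the standard trick is to subtract off a linear-in-$t$ quantity that absorbs it. For the upper bound I would consider $\psi := t\,\dot u_\e - A\,u_\e - nt$ (or a variant with the constant $A$ from Lemma \ref{lem-C0}); computing $(\partial_t - \Delta)\psi$, one finds the bad trace term from $\partial_t(t\dot u_\e)$ is cancelled against a $-A\,\mathrm{tr}_{\dr_\e(t)}(\ddb u_\e)$-type contribution coming from $-A\dot u_\e = -A\log\frac{\dr_\e(t)^n}{\bar\dr^n}-Af$ together with $\mathrm{tr}_{\dr_\e(t)}\dr_{t,\e}\le \mathrm{tr}_{\dr_\e(t)}\dr_\e(t)=n$. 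This forces $(\partial_t-\Delta)\psi \le (\text{bounded in terms of }C(t),f)$, so by Proposition \ref{prop-max}(i) the maximum of $\psi$ is essentially controlled by its initial value $\psi(\cdot,0)=0$ plus the time integral of the bound, yielding $t\dot u_\e \le A\,u_\e + nt + (\text{bdd})$, hence $\dot u_\e \le \frac{C_2(t)}{t}+n$ after invoking Lemma \ref{lem-C0}.

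For the lower bound the key observation is the monotonicity hidden in the equation: since $\partial_t\dr_{t,\e}=(n+1)\bar\dr>0$, the quantity $\frac{(\dr_{t,\e}+\ddb u_\e)^n}{\bar\dr^n}$ tends to be increasing, which suggests $\dot u_\e$ cannot be too negative once $t>0$. Concretely I would apply the maximal principle to $\psi := \dot u_\e - n\log t + (\text{correction})$, or alternatively estimate directly: from $\dot u_\e = \log\frac{(\dr_{t,\e}+\ddb u_\e)^n}{\bar\dr^n}+f$ and $\dr_{t,\e}\ge (n+1)t\,\bar\dr$ one gets $\dot u_\e \ge n\log\bigl((n+1)t\bigr)+ \log\frac{(\bar\dr+\frac{1}{(n+1)t}\ddb u_\e)^n}{\bar\dr^n}+f$, so it remains to bound $\frac{(\bar\dr+s\,\ddb u_\e)^n}{\bar\dr^n}$ from below at $s=\frac{1}{(n+1)t}$, which by concavity of $\log\det$ follows from a lower bound on $\Delta_{\bar\dr}u_\e$; and such a Laplacian bound will in turn come from the same maximal-principle machinery applied to $t\dot u_\e - A u_\e$ once the upper bound is in hand, since $\mathrm{tr}_{\bar\dr}\ddb u_\e = \mathrm{tr}_{\bar\dr}(\dr_\e(t)) - \mathrm{tr}_{\bar\dr}\dr_{t,\e}$ is controlled above by the parabolic Schwarz/Aubin--Yau inequalities of Lemma \ref{lem-ineq} and below by $-\mathrm{tr}_{\bar\dr}\dr_{t,\e}$.

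I expect the main obstacle to be the bookkeeping that makes the bad trace terms cancel with the right sign while keeping all constants independent of $\e$ and locally bounded in $t$ up to $T_{\max}$ — in particular choosing the multiplier (the power of $t$ and the constant $A$) so that the error terms $\mathrm{tr}_{\dr_\e(t)}\dr_{t,\e}\le n$ and $\mathrm{tr}_{\dr_\e(t)}\bar\dr$ are dominated rather than left over. The uniform bound on $\partial_t^2\psi$ needed to run Proposition \ref{prop-max} (as already used in Lemma \ref{lem-C0}) is available because $\dr_\e(t)$ is a bounded-curvature solution for each fixed $\e$, so that technical point does not cause trouble; the content is entirely in the algebraic structure of the linearized Monge--Amp\`ere flow together with the asymptotic geometry $R_{i\bar jk\bar l}=-(g_{i\bar j}g_{k\bar l}+g_{i\bar l}g_{k\bar j})+O(1/\varphi^2)$ established above, which supplies the bisectional curvature bounds that Lemma \ref{lem-ineq} requires.
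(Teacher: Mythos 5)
Your upper-bound argument is essentially the paper's: the paper takes exactly the quantity $t\dot u_{\e}-u_{\e}-nt$ (your $\psi$ with $A=1$) and computes $(\partial_t-\Delta_{\e})(t\dot u_{\e}-u_{\e}-nt)=t(n+1)\mathrm{tr}_{\dr_{\e}}\bar\dr-\mathrm{tr}_{\dr_{\e}}\dr_{t,\e}=-\mathrm{tr}_{\dr_{\e}}\bigl(\dr_{0}+(n+1)\e\bar\dr\bigr)\le 0$, after which Omori--Yau and Lemma \ref{lem-C0} give $\dot u_{\e}\le u_{\e}/t+n\le C_{2}(t)/t+n$. However, the cancellation you invoke, namely $\mathrm{tr}_{\dr_{\e}(t)}\dr_{t,\e}\le\mathrm{tr}_{\dr_{\e}(t)}\dr_{\e}(t)=n$, is false unless $\sqrt{-1}\partial\bar\partial u_{\e}\ge 0$, which you do not know; the inequality actually needed is the pointwise bound $\dr_{t,\e}\ge(n+1)t\,\bar\dr$, which makes the evolution inequality nonpositive outright, so no time-integration of a ``bounded'' right-hand side is required. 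This is a fixable slip.

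The lower bound is where there is a genuine gap. Your ``direct'' route asks for a lower bound on $\log\bigl((\bar\dr+s\,\sqrt{-1}\partial\bar\partial u_{\e})^{n}/\bar\dr^{n}\bigr)$ from a lower bound on $\Delta_{\bar\dr}u_{\e}$ via concavity of $\log\det$; but concavity gives the inequality in the wrong direction ($\log\det A\le\log\det B+\mathrm{tr}(B^{-1}(A-B))$ controls the determinant from \emph{above} by a trace), and a Hermitian form $\bar\dr+s\,\sqrt{-1}\partial\bar\partial u_{\e}$ can have large trace while one eigenvalue degenerates to $0$, sending $\log\det$ to $-\infty$. A genuine lower bound on the volume ratio amounts to the metric equivalence of Lemma \ref{lem-C2}, whose proof in the paper \emph{uses} the lower bound on $\dot u_{\e}$ being established here (through the term $-A\dot u_{\e}$ in the Chern--Lu test function), so that route is circular. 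The paper's actual argument, which your first alternative names but does not execute, is: apply Omori--Yau to $\dot u_{\e}-n\log t$, which tends to $+\infty$ uniformly as $t\to 0^{+}$, so its infimum is approached at times $\bar t>0$; at such a point $(n+1)\mathrm{tr}_{\dr_{\e}}\bar\dr\le n/\bar t$, and the arithmetic--geometric mean inequality $\mathrm{tr}_{\dr_{\e}}\bar\dr\ge n(\bar\dr^{n}/\dr_{\e}^{n})^{1/n}$ then forces $\dr_{\e}^{n}/\bar\dr^{n}\ge(C\bar t)^{n}$ there; feeding this back into the flow equation $\dot u_{\e}=\log(\dr_{\e}^{n}/\bar\dr^{n})+f$ gives $\dot u_{\e}(p,\bar t)\ge n\log\bar t-C'$, and minimality of $\dot u_{\e}-n\log t$ at $(p,\bar t)$ propagates the bound $\dot u_{\e}\ge n\log t-C_{1}(t)$ everywhere. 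Without this step the left half of \eqref{eq:time-dev} is not proved.
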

\begin{proof}
We will adapt the inequalities in \cite{CLS,ST2} to this lemma. For the upper bound, we have the following inequality
\begin{align*}
(\dt-\Delta_{\e})(t\dot{u}_{\e}-u_{\e}-nt)=&t(\dt-\Delta_{\e})\dot{u}_{\e}+\Delta_{\e}u_{\e}-n\\
=&t(n+1)tr_{\dr_{\e}}\bar{\dr}-tr_{\dr_{\e}}\dr_{t}=-tr_{\dr}\dr_{0,\e}<0,
\end{align*}
where $\Delta_{\e}$ is the Laplacian with respect to $\dr_{\e}(t).$
Apply Omori-Yau maximal principle combined with Lemma \ref{lem-C0}, we conclude the uniformly upper bound estimate.
For the lower bound, consider the following inequality
\begin{align*}
(\dt-\Delta_{\e})(\dot{u}_{\e}-n\log t)=&(n+1)tr_{\dr_{\e}}\bar{\dr}-\frac{n}{t}\\
\geq&C(\frac{\bar{\dr}^{n}}{\dr_{\e}^{n}})^{1/n}-\frac{n}{t}.
\end{align*}
Observe that $\dot{u}_{\e}-n\log t$ tends to infinity uniformly at $t=0,$ apply Omori-Yau maximal principle again and
still assume the minimum could be attained (otherwise choose a minimizing sequence) at $(p,\bar{t})$ for some $\bar{t}>0,$ it holds that
$$\dot{u}_{\e}(p,\bar{t})=\log\frac{\dr_{\e}^{n}}{\bar{\dr}^{n}}(p,\bar{t})-f(p)\geq n\log\bar{t}-C',$$
thus it holds that $\dot{u}_{\e}\geq n\log t-C_{1}(t)$ which is independent of $\e.$
\end{proof}
Base on the lemmas above we could derive a Laplacian estimate:
\begin{lemma}\label{lem-C2}
There exist two constants $C_{3}(t),C_{4}(t)>0$ independent of $\e$ such that
\begin{equation}\label{eq:C2}
C_{3}(t)\bar{\dr}\leq\dr_{\e}(t)\leq C_{4}(t)\bar{\dr}.
\end{equation}
\end{lemma}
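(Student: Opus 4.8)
The plan is to establish the two-sided bound in \eqref{eq:C2} via the parabolic Aubin--Yau and Chern--Lu inequalities from Lemma \ref{lem-ineq}, using $\bar{\dr}$ as the reference metric. Recall that from the curvature computation \eqref{eq:cur} and the discussion following it, the bisectional curvature of $\bar{\dr}$ is asymptotically $-1$ and, crucially, \emph{uniformly bounded} on all of $\Omega$; so there are constants $k_{1},k_{2}>0$ with $-k_{1}\leq \text{bisec}(\bar{\dr})\leq k_{2}$. First I would prove the upper bound $\dr_{\e}(t)\leq C_{4}(t)\bar{\dr}$. Apply \eqref{eq:A-Y} with $\dr_{1}=\bar{\dr}$ to get
\begin{equation*}
(\dt-\Delta_{\e})\log tr_{\bar{\dr}}\dr_{\e}(t)\leq k_{1}\,tr_{\dr_{\e}(t)}\bar{\dr}.
\end{equation*}
The right-hand side is the bad term, so I would run the standard Aubin--Yau trick: set $H=\log tr_{\bar{\dr}}\dr_{\e}-A u_{\e}$ for a large constant $A$ (chosen bigger than $k_{1}$ plus a bound coming from $tr_{\bar\dr}\dr_{t,\e}$), and use $\Delta_{\e}u_{\e}=n-tr_{\dr_{\e}}\dr_{t,\e}=n-tr_{\dr_{\e}}\bar\dr\cdot(n+1)(t+\e)-tr_{\dr_\e}\dr_0$ together with the elementary inequality $tr_{\dr_{\e}}\bar{\dr}\geq (tr_{\bar{\dr}}\dr_{\e})^{1/(n-1)}(\bar{\dr}^{n}/\dr_{\e}^{n})^{1/(n-1)}$ and the lower bound on $\dr_{\e}^{n}/\bar{\dr}^{n}=e^{\dot u_{\e}-f}$ from Lemma \ref{lem-time dev}. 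This makes $(\dt-\Delta_{\e})H\leq C$ once $tr_{\bar\dr}\dr_\e$ is large; since $H$ is bounded above (bounded curvature solution equivalent to $\bar\dr$), the Omori--Yau maximal principle of Proposition \ref{prop-max} forces $\log tr_{\bar{\dr}}\dr_{\e}$ to be bounded above in terms of $t$, $C(t)$ and the $C^{0}$ bound on $u_{\e}$, which depend only on $t$ and not on $\e$. Combined with the lower bound on $\det$ this also yields the reverse inequality $\dr_{\e}(t)\geq C_{3}(t)\bar{\dr}$, since $tr_{\dr_\e}\bar\dr\leq \frac1{(n-1)!}(tr_{\bar\dr}\dr_\e)^{n-1}\bar\dr^n/\dr_\e^n$ is then bounded above.

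Alternatively, and perhaps more cleanly for the lower bound, I would apply the Chern--Lu inequality \eqref{eq:C-L} with $\dr_{2}=\bar{\dr}$ directly:
\begin{equation*}
(\dt-\Delta_{\e})\log tr_{\dr_{\e}(t)}\bar{\dr}\leq k_{2}\,tr_{\dr_{\e}(t)}\bar{\dr},
\end{equation*}
and form $G=\log tr_{\dr_{\e}}\bar{\dr}-(k_{2}+1)u_{\e}+\lambda\dot u_\e$ or simply $G=\log tr_{\dr_\e}\bar\dr-(k_2+1)u_\e$; using $\Delta_\e u_\e = n - (n+1)(t+\e)tr_{\dr_\e}\bar\dr - tr_{\dr_\e}\dr_0$ we get, whenever $t\geq t_0>0$ so that $(n+1)(t+\e)\geq (n+1)t_0 > k_2+1$ up to shrinking, that $(\dt-\Delta_\e)G\leq -tr_{\dr_\e}\bar\dr + C\leq C$, hence $G$ bounded above by Omori--Yau gives $tr_{\dr_\e}\bar\dr\leq C(t)$, i.e. $\dr_\e(t)\geq C_3(t)\bar\dr$. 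For small $t$ one absorbs the factor using the explicit background $\dr_{t,\e}$ and the $\dot u_\e$ estimate of Lemma \ref{lem-time dev}. Either way both inequalities in \eqref{eq:C2} follow with constants depending only on $t$ (through $C(t), C_1(t), C_2(t)$) and not on $\e$.

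The main obstacle I anticipate is not the choice of auxiliary function but ensuring the maximum principle actually applies: $\bar{\dr}$ is complete but the estimates must be uniform in $\e$, so one must be careful that the quantity to which Omori--Yau is applied ($H$ or $G$) is genuinely bounded above on $\Omega\times[0,T]$ uniformly in $\e$ before the argument runs — this is where the a priori knowledge that $\dr_\e(t)$ is a bounded-curvature solution equivalent to $\bar\dr$ (from Shi's theorems, for each fixed $\e$) is used to license Proposition \ref{prop-max}, after which the resulting bound is seen to be $\e$-independent. A secondary technical point is the behavior as $t\to 0^{+}$: the background $\dr_{t,\e}$ degenerates, $tr_{\dr_\e}\bar\dr$ may blow up like $t^{-1}$, so the constants $C_3(t), C_4(t)$ will necessarily depend on $t$ and one only claims boundedness on compact subintervals of $(0,T_{\max})$; near $t=0$ the term $\dot u_\e - n\log t\to+\infty$ from Lemma \ref{lem-time dev} is exactly what rescues the argument, playing the same role it did there. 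Handling the case where the supremum is not attained is routine — as the authors note after Lemma \ref{lem-C0}, one replaces points by an Omori--Yau minimizing/maximizing sequence and uses the uniform bound on $\partial_t^2$ coming from the bounded-curvature solution.
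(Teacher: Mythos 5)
Your proposal is correct in substance, and your ``alternative'' Chern--Lu route is essentially the paper's proof: the authors apply \eqref{eq:C-L} with $\dr_{2}=\bar{\dr}$, bound $tr_{\dr_{\e}}\bar{\dr}$ by Omori--Yau, and then get the reverse inequality from the volume-form identity $\dr_{\e}^{n}=\bar{\dr}^{n}e^{\dot{u}_{\e}-f}$ together with Lemma \ref{lem-time dev}, exactly as you describe. The one genuine difference is the choice of auxiliary function: the paper uses $\log tr_{\dr_{\e}}\bar{\dr}-A\dot{u}_{\e}+B\log t$ rather than your $-(k_{2}+1)u_{\e}$. Since $(\frac{\partial}{\partial t}-\Delta_{\e})\dot{u}_{\e}=(n+1)tr_{\dr_{\e}}\bar{\dr}$, the term $-A\dot{u}_{\e}$ produces the good term $-A(n+1)tr_{\dr_{\e}}\bar{\dr}$ with a coefficient that does \emph{not} degenerate as $t\to0$, whereas your $\Delta_{\e}u_{\e}$ route only yields $-(n+1)(t+\e)tr_{\dr_{\e}}\bar{\dr}$ and so forces the restriction $t\geq t_{0}$; moreover the $B\log t$ term, combined with $tr_{\dr_{\e}}\bar{\dr}\leq 1/\e$ and $\dot{u}_{\e}\geq n\log((n+1)\e)$ at $t=0$, sends the whole quantity to $-\infty$ at the initial time, so a single Omori--Yau application on all of $[0,T]$ suffices and there is no need to control the value at an artificial starting time $t=t_{0}$ (a point your sketch leaves open: on $[t_{0},T]$ the maximum could sit at $t=t_{0}$, where no a priori bound on $tr_{\dr_{\e}}\bar{\dr}$ is yet available). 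Your first (Aubin--Yau) route is also viable and a touch more symmetric, but it is not what the paper does. Two small slips: in $G=\log tr_{\dr_{\e}}\bar{\dr}-(k_{2}+1)u_{\e}+\lambda\dot{u}_{\e}$ the coefficient $\lambda$ must be \emph{negative} for the $\dot{u}_{\e}$ term to help (the paper's $-A\dot{u}_{\e}$), and the resulting bound is of the form $tr_{\dr_{\e}}\bar{\dr}\leq e^{C/t}$, i.e.\ the constants necessarily blow up as $t\to0$, consistent with your closing remark.
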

\begin{proof}
By \eqref{eq:cur} or the curvature formula in \cite{CY} the bisectional curvature of $\bar{\dr}$ is bounded from above by a constant $k\geq 0.$
Then apply Chern-Lu Inequality \eqref{eq:C-L}, we have
\[(\dt-\Delta_{\e})(\log tr_{\dr_{\e}}\bar{\dr}-A\dot{u}_{\e}+B\log t)\leq(k-A(n+1))tr_{\dr_{\e}}\bar{\dr}+\frac{B}{t}.\]
First we choose $A$ such that $k-A(n+1)=-1,$ then consider that at $t=0,$ it holds that
$$tr_{\dr_{\e}}\bar{\dr}\leq\frac{1}{\e},\;\dot{u}_{\e}\geq n\log(n+1)\e,$$ thus
$\log tr_{\dr_{\e}}\bar{\dr}-A\dot{u}_{\e}+B\log t$ tends to $-\infty$ and the maximum could only be attained for $t>0.$ As we argued before we
still assume this maximum is attained at $(p,\bar{t})$ then it holds that $tr_{\dr_{\e}}\bar{\dr}(p,\bar{t})\leq\frac{B}{\bar{t}},$
which implies $$\log tr_{\dr_{\e}}\bar{\dr}-A\dot{u}_{\e}+B\log t\leq(B-1-n)\log t+AC_{1}(t)+\log B<\infty$$ as long as $B\geq n+1.$ Thus choose
$B\geq n+1,$ $\log tr_{\dr_{\e}}\bar{\dr}-A\dot{u}_{\e}+B\log t$ is uniformly bounded from above which implies
$$tr_{\dr_{\e}}\bar{\dr}\leq e^{\frac{C}{t}}$$ by Lemma \ref{lem-time dev}. Next consider the equation \eqref{eq:cma1-app} we have
$$\dr_{\e}^{n}=\bar{\dr}^{n}e^{\dot{u}_{\e}-f}\leq\bar{\dr}^{n}e^{\frac{C_{2}(t)}{t}+C'},$$ which concludes the proof.
\end{proof}
As \cite{Chau,ST2} we can proceed to derive higher order estimates as following:
\begin{lemma}\label{lem-high}
There exists a constant $C_{5}(t)$ such that $$|\nabla_{\bar{\dr}}\dr_{\e}|^{2}_{\dr_{\e}}\leq e^{\frac{C_{5}(t)}{t}},$$ and a constant $C_{t_{1},t_{2},k}>0$
for $0<t_{1}<t_{2}<T_{max}$ such that
\begin{equation}\label{eq:high order}
|u_{\e}|_{C^{k}([t_{1},t_{2}]\times\Omega,\bar{\dr})}\leq C_{t_{1},t_{2},k}.
\end{equation}
\end{lemma}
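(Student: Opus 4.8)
The plan is to follow the now-standard bootstrap scheme for parabolic complex Monge-Amp\`ere flows (as in \cite{Chau,ST2}), using the uniform $C^0$ and $C^2$ estimates already established in Lemmas \ref{lem-C0}--\ref{lem-C2}. The key point is that Lemma \ref{lem-C2} shows $\dr_\e(t)$ is uniformly equivalent to the complete background metric $\bar\dr$ on compact time subintervals $[t_1,t_2]\subset(0,T_{\max})$, with constants depending only on $t$ and not on $\e$; this is exactly the non-degeneracy needed to run the higher-order theory. All the metrics $\bar\dr$, $\dr_0$ and their curvatures have uniformly bounded covariant derivatives of all orders by the asymptotic analysis via \eqref{eq:cur}, so the background geometry is tame.

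The first step is the gradient-of-the-metric estimate $|\nabla_{\bar\dr}\dr_\e|^2_{\dr_\e}\le e^{C_5(t)/t}$. I would apply the Omori-Yau maximal principle (Proposition \ref{prop-max}) to a Calabi-type quantity, something like $S:=|\nabla_{\bar\dr}\dr_\e|^2_{\dr_\e}$ times a suitable power of $t$, coupled with a multiple of $\mathrm{tr}_{\dr_\e}\bar\dr$ to absorb the bad curvature terms in the evolution of $S$. The standard Calabi computation gives $(\partial_t-\Delta_\e)S \le c(S + \text{lower order in }\mathrm{tr}_{\dr_\e}\bar\dr \text{ and its square})$ where $c$ depends on the bounded curvature of $\bar\dr$ and $\dr_0$; after multiplying by the $C^2$ barrier $\mathrm{tr}_{\dr_\e}\bar\dr\le e^{C/t}$ from Lemma \ref{lem-C2}, and choosing the coefficient of $\mathrm{tr}_{\dr_\e}\bar\dr$ large, one gets a differential inequality to which the maximal principle applies, noting as in Lemma \ref{lem-C2} that the relevant quantity blows up favorably at $t=0$ thanks to $\dr_{t,\e}\ge (n+1)\e\bar\dr$. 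This yields the claimed exponential bound, which is equivalent to a uniform $C^{2,\alpha}$-type control (via the $C^3$ bound on $u_\e$ it encodes).

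Once the metric and its first covariant derivative are controlled, the equation $\partial_t u_\e = \log\frac{(\dr_{t,\e}+\ddb u_\e)^n}{\bar\dr^n}+f$ becomes a uniformly parabolic complex Monge-Amp\`ere equation with uniformly bounded coefficients on $[t_1,t_2]\times\Omega$ (measured in $\bar\dr$). Then interior parabolic Schauder estimates — applied in local holomorphic coordinates where $\bar\dr$ is uniformly comparable to the Euclidean metric — bootstrap to give $|u_\e|_{C^{2,\alpha}}\le C_{t_1,t_2}$, then differentiating the equation and iterating yields $|u_\e|_{C^k([t_1,t_2]\times\Omega,\bar\dr)}\le C_{t_1,t_2,k}$ for every $k$, with constants independent of $\e$. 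The shift in time interval (from $(0,t_2)$ to $[t_1,t_2]$) is harmless since we only need the estimates away from $t=0$.

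The main obstacle is the gradient estimate, step two: one has to run the Calabi/Shi computation for $|\nabla_{\bar\dr}\dr_\e|^2_{\dr_\e}$ carefully, making sure every term produced by commuting derivatives and by the curvature of the \emph{fixed} background $\bar\dr$ (rather than the evolving metric) is either controlled by $S$ itself or by $\mathrm{tr}_{\dr_\e}\bar\dr$ and $(\mathrm{tr}_{\dr_\e}\bar\dr)^2$, so that the $C^2$ barrier from Lemma \ref{lem-C2} can absorb them — and simultaneously checking the boundary-in-time behavior at $t=0$ is favorable so that Proposition \ref{prop-max} can be invoked exactly as in the proof of Lemma \ref{lem-C2}. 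After that, the Schauder bootstrap is entirely routine and local, so I would state it briefly and refer to \cite{Chau,ST2}.
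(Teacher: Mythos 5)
Your proposal is correct and follows essentially the same route the paper intends: the paper gives no proof of this lemma at all, merely writing ``As \cite{Chau,ST2} we can proceed to derive higher order estimates,'' and your sketch --- the Calabi-type third-order quantity coupled with a large multiple of $tr_{\dr_{\e}}\bar{\dr}$, the Omori--Yau maximal principle with the favorable behavior at $t=0$ coming from $\dr_{t,\e}\geq(n+1)\e\bar{\dr}$, and then a local parabolic Schauder bootstrap on $[t_{1},t_{2}]$ --- is exactly the argument of those references, with the $e^{C_{5}(t)/t}$ bound inherited from the equivalence constants of Lemma \ref{lem-C2}. The only cosmetic imprecision is the phrase about ``multiplying by the $C^{2}$ barrier''; the standard mechanism is to add $A\,tr_{\dr_{\e}}\bar{\dr}$ to $S$ so that its evolution inequality absorbs the $+CS$ term, which you also state, so nothing is missing.
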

Given the Lemmas above, by the compactness theorem, we conclude that there exists a sequence $\e_{k}\searrow 0$ such that $u_{\e_{k}}$ converges to a solution
$u$ satisfying \eqref{eq:cma1} in the space of $C^{0}([0,T_{max})\times\Omega)\cap C^{\infty}((0,T_{max})\times\Omega).$ Moreover by the Laplacian estimate in \ref{lem-C2} the corresponding flow solution will be complete as long as $t>0.$ By use of maximal principle again
we establish the uniqueness of the solution to \eqref{eq:cma1}:
\begin{proposition}\label{prop-unique}
There exists a unique solution $u$ satisfying \eqref{eq:cma1} in the space of $C^{0}([0,T_{max})\times\Omega)\cap C^{\infty}((0,T_{max})\times\Omega).$
\end{proposition}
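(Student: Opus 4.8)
The plan is to prove uniqueness by a comparison argument using the Omori-Yau maximal principle from Proposition \ref{prop-max}. Suppose $u$ and $v$ are two solutions to \eqref{eq:cma1} in the stated regularity class. Since both lie in $C^{0}([0,T_{max})\times\Omega)$ with $u(0)=v(0)=0$, the difference $w:=u-v$ is bounded on $\Omega\times[0,T]$ for each $T<T_{max}$ and vanishes at $t=0$. Subtracting the two Monge-Amp\`ere flow equations gives
\begin{equation*}
\dt w=\log\frac{(\dr_{t}+\ddb u)^{n}}{(\dr_{t}+\ddb v)^{n}},
\end{equation*}
and the right-hand side can be written as $\int_{0}^{1}tr_{\dr_{s}}(\ddb w)\,ds$ where $\dr_{s}=s(\dr_{t}+\ddb u)+(1-s)(\dr_{t}+\ddb v)$ is a positive metric for each $s\in[0,1]$ on $(0,T_{max})$. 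Thus $w$ satisfies a linear parabolic inequality: at a spatial maximum of $w$ (for fixed time) the integrand is $\le 0$, so morally $\dt w\le 0$ there, which combined with $w(\cdot,0)=0$ should force $w\le 0$; the reverse inequality follows by symmetry, giving $u=v$.

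The key steps, in order, are as follows. First I would fix $T<T_{max}$ and introduce the auxiliary function $\psi:=w-\delta t$ for a small parameter $\delta>0$ (or alternatively $\psi := w$ with a direct argument), to get strict negativity of $\dt\psi$ and to push the maximum to $t=0$. Second, I would note that for $t$ bounded away from $0$ the solution flow $\dr(t)=\dr_t+\ddb u$ has bounded curvature and is equivalent to $\bar\dr$ by Lemma \ref{lem-C2} and Lemma \ref{lem-high}, so Proposition \ref{prop-max}(i) applies on $\Omega\times[t_0,T]$ for any $t_0>0$: there is a sequence $p_k$ and time $\bar t\in[t_0,T]$ with $\psi(p_k,\bar t)\to\sup\psi$ and $\ddb\psi(p_k,\bar t)\le\frac1k\dr(p_k,\bar t)$. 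Third, feeding this into the linearized equation above and using that the averaged metric $\dr_s$ is comparable to $\dr(\bar t)$ up to a factor depending only on $\bar t$, I would conclude $tr_{\dr_s}\ddb\psi(p_k,\bar t)\le C(\bar t)/k\to 0$, hence $\dt\psi(p_k,\bar t)\le -\delta+o(1)<0$; as in Lemma \ref{lem-C0}, bounded second time derivative at $(p_k,\bar t)$ (from bounded curvature) contradicts $\psi(p_k,\bar t)\to\sup\psi$ unless $\bar t=t_0$. Fourth, letting $t_0\to0$ and using $w(\cdot,0)=0$ together with continuity up to $t=0$, I would get $\sup_{\Omega\times[0,T]}\psi\le 0$, then let $\delta\to0$ to obtain $w\le 0$ on $\Omega\times[0,T]$; swapping $u$ and $v$ gives $w\equiv0$, and since $T<T_{max}$ was arbitrary, uniqueness follows on all of $[0,T_{max})$.

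The main obstacle I anticipate is the behavior at the initial time $t=0$. Away from $t=0$ everything is smooth with bounded curvature and the maximum principle applies cleanly, but near $t=0$ the background metric $\dr_t$ degenerates and the curvature of the flow blows up (Lemma \ref{lem-C2} only gives $C_3(t)\bar\dr\le\dr(t)$ with $C_3(t)\to0$). One must therefore be careful that the Omori-Yau principle is only invoked on slabs $[t_0,T]$ with $t_0>0$, and that the contribution of the boundary slab $\{t=t_0\}$ is controlled as $t_0\to0$; here the continuity of $w$ down to $t=0$ with $w(\cdot,0)=0$, which is part of the hypothesis $u,v\in C^0([0,T_{max})\times\Omega)$, is exactly what saves the argument, since $\sup_{\Omega}|w(\cdot,t_0)|\to0$. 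A secondary technical point, already flagged in the paper's treatment of Lemma \ref{lem-C0}, is justifying that the supremum-approaching sequence $p_k$ cannot escape to the boundary in a way that violates the contradiction; this is handled uniformly because $\psi$ is bounded above and the estimate $\dt\psi(p_k,\bar t)<0$ holds at every point of the sequence, not just in a limit.
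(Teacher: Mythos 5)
Your proposal is correct and follows essentially the same route as the paper: both prove uniqueness by a comparison argument on the difference of the two potentials, invoking the Omori--Yau maximal principle (Proposition \ref{prop-max}) to handle the lack of an attained maximum, and using $w(\cdot,0)=0$ to propagate $w\le 0$ forward in time. Your packaging via the linearization $\int_{0}^{1}tr_{\dr_{s}}(\ddb w)\,ds$, the $-\delta t$ barrier, and slabs $[t_{0},T]$ is a minor technical variant of the paper's argument, which instead writes $\dt v=\log\frac{(\dr_{2}+\ddb v)^{n}}{\dr_{2}^{n}}$ and concludes $\dt\sup v(\cdot,t)\le 0$ in the distributional sense; both versions share the same implicit reliance on $\sup_{\Omega}|w(\cdot,t)|\to 0$ as $t\to 0$.
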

\begin{proof}
We only need to prove the uniqueness. Suppose we have two solutions $u_{1},u_{2}$ satisfying \eqref{eq:cma1}, then write $v=u_{1}-u_{2}$ and $\dr_{2}=\dr_{t}+\ddb u_{1}=\dr_{0}+(n+1)t\bar{\dr}+\ddb u_{2},$ we have the following equation for $v:$
\begin{equation*}
\left\{ \begin{array}{rcl}
\dt v&=&\log\frac{(\dr_{2}+\ddb v)^{n}}{\dr_{2}^{n}}\\
v(0)&=&0 \end{array}\right.
\end{equation*}
$v$ is also a bounded function. If the maximum of $v(t)$ is attained for each $t$ obviously we have $v\leq 0.$ For the general case, we apply Omori-Yau maximal principle again that for each $t>0$ there exists a sequence of points $p_{k}$ such that $v(p_{k},t)\to\sup v(\cdot,t)$ and $\ddb v(p_{k},t)\leq\frac{1}{k}\dr_{2}$ as $\dr_{2}(t)$ is a bounded curvature metric for $t>0.$ Thus it holds that
$$\dt\sup v(\cdot,t)\leq\lim\sup_{k}\log(1+\frac{1}{k})^{n}=0$$ in the sense of distribution, which implies $u_{1}\leq u_{2}.$ Similarly we have
$u_{1}\geq u_{2}$ and the uniqueness is approved.
\end{proof}
Consider Lemma \ref{lem-C0} again, we have a more precise estimate for $u_{\e}$ essentially by maximal principle:
\begin{equation}\label{eq:C0-precise}
\int_{0}^{t}n\log(n+1)(s+\e)ds+t\inf_{\Omega}f\leq u_{\e}(t)\leq\int_{0}^{t}n\log(c+(n+1)(s+\e))ds+t\sup_{\Omega}f,
\end{equation}
where $c$ is a constant such that $\dr_{0}\leq c\bar{\dr}$ on $\Omega.$ Note that this bound is uniformly bounded on any bounded time interval
$[0,T]$ which essentially implies the uniformly bounded high order estimates for $u_{\e}$ until $t=T_{max}.$ This essentially implies that
derivatives of $u$ of all orders at $t=T_{max}$ is uniformly bounded. By the short time existence, this flow could be extended to another time interval, and by \eqref{eq:C0-precise} again, the solution could be extended to infinite time:
\begin{theorem}\label{thm-long time}
There exists a unique solution $u$ to the equation \eqref{eq:cma1} in the space of $C^{0}([0,+\infty)\times\Omega)\cap C^{\infty}((0,+\infty)\times\Omega).$
This solution gives rise to a family of \ka\ metrics $\dr(t)$ which satisfy the \ka-Ricci flow \eqref{eq:krf1} on $[0,+\infty)\times\Omega$ and are complete as long as $t>0.$
\end{theorem}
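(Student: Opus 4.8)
The plan is to package the $\e$-independent a priori estimates of Lemmas \ref{lem-C0}--\ref{lem-high} into a limiting argument producing a solution on $[0,T_{max})$, to deduce that the associated metrics solve \eqref{eq:krf1} and are complete for $t>0$, and finally to run a continuation argument — powered by the \emph{explicit} bound \eqref{eq:C0-precise} — to push $T_{max}$ to $+\infty$; uniqueness is then handed to us by Proposition \ref{prop-unique}.

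First I would fix $T<T_{max}$ and assemble, from Lemmas \ref{lem-C0}, \ref{lem-time dev}, \ref{lem-C2} and \ref{lem-high}, the $\e$-independent bounds on $u_\e$, on $\dot u_\e$, on the two-sided equivalence $\dr_\e(t)\simeq\bar\dr$, and on the higher $\bar\dr$-covariant derivatives of $u_\e$ on every $[t_1,t_2]\times\Omega$ with $0<t_1<t_2<T$, together with the $C^0$ bound that is uniform up to $t=0$. By Arzel\`a--Ascoli and a diagonal extraction these yield a sequence $\e_k\searrow0$ with $u_{\e_k}\to u$ in $C^0([0,T_{max})\times\Omega)\cap C^\infty_{loc}((0,T_{max})\times\Omega)$; passing to the limit in \eqref{eq:cma1-app} shows $u$ solves \eqref{eq:cma1}, and continuity of $u$ at $t=0$ with $u(0)=0$ follows from the $C^0$ convergence together with \eqref{eq:C0-precise}, whose outer terms tend to $0$ as $t,\e\to0$. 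Setting $\dr(t):=\dr_t+\ddb u$, Lemma \ref{lem-C2} gives $C_3(t)\bar\dr\le\dr(t)\le C_4(t)\bar\dr$, so $\dr(t)$ is a genuine \ka\ metric, uniformly equivalent to $\bar\dr$ and hence complete, for every $t>0$; and the computation preceding \eqref{eq:cma1} shows $\dt\dr(t)=-Ric(\dr(t))$ for $t>0$ with $\dr(0)=\dr_0$, i.e. $\dr(t)$ solves \eqref{eq:krf1}. Uniqueness of $u$ in the stated class on $[0,T_{max})$ is exactly Proposition \ref{prop-unique}.

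To promote this to a global solution I would show $T_{max}=+\infty$. The key point is that \eqref{eq:C0-precise} is an explicit two-sided bound which stays finite on every compact interval $[0,T]$; hence the constant $C(t)$ of Lemma \ref{lem-C0}, and through Lemmas \ref{lem-time dev}, \ref{lem-C2}, \ref{lem-high} also every higher $\bar\dr$-norm of $u_\e$ on a slab $[t_1,t_2]\times\Omega$, is controlled by data depending only on the endpoints, not on $T_{max}$. Therefore $u$ extends continuously, with all of its space-time derivatives, up to $t=T_{max}$, and $\dr(T_{max})$ is a smooth complete \ka\ metric, uniformly equivalent to $\bar\dr$, whose curvature tensor has uniformly bounded covariant derivatives of all orders. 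By Shi's short-time existence theorem \cite{Sh1,Sh2} the \ka-Ricci flow restarts from $\dr(T_{max})$ on $[T_{max},T_{max}+\delta)$, and because the geometry of $\dr(T_{max})$ is bounded by these $T_{max}$-independent estimates, $\delta$ may be taken bounded below; gluing gives a solution on $[0,T_{max}+\delta)$, contradicting maximality unless $T_{max}=+\infty$. Invoking \eqref{eq:C0-precise} once more keeps all the estimates uniform on each $[0,T]$, so the global $u\in C^0([0,+\infty)\times\Omega)\cap C^\infty((0,+\infty)\times\Omega)$ has the claimed regularity, is unique by Proposition \ref{prop-unique} applied on each $[0,T]$, and yields the desired complete solution of \eqref{eq:krf1}.

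The hard part is not existence itself: the genuinely delicate issues — the degeneracy of $\dr_t$ at $t=0$ and the incompleteness of $\dr_0$ — have already been absorbed into the approximation $\dr_{t,\e}$ and the estimates of the preceding lemmas. What remains to be handled carefully is (i) preventing the $t$-dependent constants from blowing up as $t\uparrow T_{max}$ and (ii) checking that the Shi restart at $t=T_{max}$ has a definite lifespan. Both are resolved by the explicit form of \eqref{eq:C0-precise}: it is precisely that estimate, rather than the qualitative Lemma \ref{lem-C0}, which closes the continuation and gives the long-time solution.
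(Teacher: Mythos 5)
Your proposal follows the paper's own route essentially verbatim: a compactness/diagonal extraction from the $\e$-independent estimates of Lemmas \ref{lem-C0}--\ref{lem-high} to produce the solution on $[0,T_{max})$, uniqueness from Proposition \ref{prop-unique}, and a continuation past $T_{max}$ powered by the explicit two-sided bound \eqref{eq:C0-precise} together with short-time existence. There are no gaps; you have merely spelled out the Arzel\`a--Ascoli and restart steps that the paper leaves implicit.
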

\begin{remark}
Theorem \ref{thm-long time} finishes the proof the first part of Theorem \ref{mainthm1}. However we note that this only generates a solution to the \ka-Ricci flow \eqref{eq:krf1} from the unique solution to the complex Monge-Amp\'{e}re flow \eqref{eq:cma1} but the \ka-Ricci flow solution itself may not be unique even in the class of simultaneously complete flow solutions.
\end{remark}
Now we begin the proof the second part of Theorem \ref{mainthm1}. We consider the following equation of normalized \ka-Ricci flow:
\begin{equation}\label{eq:krf2}
 \left\{ \begin{array}{rcl}
\dt\tilde{\dr}&=&-Ric(\tilde{\dr})-(n+1)\tilde{\dr}\\
\tilde{\dr}(0)&=&\tilde{\dr}_{0} \end{array}\right.
\end{equation}
Namely the solution of \eqref{eq:krf2} could be derived from the unnormalized \ka-Ricci flow \eqref{eq:krf1} by
\begin{equation}\label{eq:normal}
\tilde{\dr}(t)=e^{-(n+1)t}\dr(\frac{e^{(n+1)t}-1}{n+1}),
\end{equation}
thus the long time existence of the solution to \eqref{eq:krf1} in Theorem \ref{thm-long time} also work for \eqref{eq:krf2}.
However to prove the convergence result, we still need to analyse the behavior of the solution to the corresponding complex Monge-Amp\'{e}re flow equation.
Similar to \cite{ST1,TZ} we set $$\tilde{\dr}_{t}=e^{-(n+1)t}\dr_{0}+(1-e^{-(n+1)t})\bar{\dr},$$ and $\tilde{\dr}(t)=\tilde{\dr}_{t}+\ddb\tilde{u},$ thus
we have the following complex Monge-Amp\'{e}re flow equation:
\begin{equation}\label{eq:cma2}
\left\{ \begin{array}{rcl}
\dt\tilde{u}&=&\log\frac{(\tilde{\dr}_{t}+\ddb\tilde{u})^{n}}{\bar{\dr}^{n}}-(n+1)\tilde{u}+f\\
\tilde{u}(0)&=&0 \end{array}\right.
\end{equation}
By the relation \eqref{eq:normal} we have a unique long time solution $\tilde{u}(t)$ to \eqref{eq:cma2} in the space of $C^{0}([0,+\infty)\times\Omega)\cap C^{\infty}((0,+\infty)\times\Omega).$ The remaining issue is to investigate the limit behavior of the solution $\tilde{u}(t)$ as long as $t$ tends to infinity.
Still by Omori-Yau maximal principle, we have the following $C^{0}$-estimate for $\tilde{u}:$
\begin{lemma}\label{lem-C0-nor}There exists a uniform constant $C>0$ such that
\begin{equation}\label{eq:C0-nor}
|\tilde{u}(t)|\leq C
\end{equation}
on $[0,+\infty)\times\Omega.$
\end{lemma}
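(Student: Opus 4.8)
\textbf{Proof proposal for Lemma~\ref{lem-C0-nor}.}

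The plan is to run the Omori--Yau maximal principle of Proposition~\ref{prop-max} on the normalized flow \eqref{eq:cma2}, exactly as in the proofs of Lemmas~\ref{lem-C0}--\ref{lem-C2}. For any $0<\delta<T$ the metric $\tilde{\dr}(t)$ is, on $[\delta,T]$, complete with bounded curvature and uniformly equivalent to $\bar{\dr}$ -- these properties transfer from Lemmas~\ref{lem-C2} and~\ref{lem-high} for $\dr(t)$ through the rescaling \eqref{eq:normal} -- so the maximal principle applies there, and the $t=0$ end is handled as in Lemma~\ref{lem-C0} (either through the $\e$-approximation, or by noting that the near-extremal time produced by Proposition~\ref{prop-max} must be positive since $\tilde{u}(\cdot,0)\equiv 0$). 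The genuinely new ingredient is the damping term $-(n+1)\tilde{u}$ in \eqref{eq:cma2}: it turns the maximum-principle inequality into an autonomous one and hence produces bounds that do not deteriorate in $t$. I record two elementary comparisons for the background form: since $\tilde{\dr}_{t}=e^{-(n+1)t}\dr_{0}+(1-e^{-(n+1)t})\bar{\dr}$ and $\dr_{0}\leq c\bar{\dr}$ (the constant $c$ from \eqref{eq:C0-precise}), we have $\tilde{\dr}_{t}\leq\max(c,1)\bar{\dr}$ for all $t\geq 0$, while $\tilde{\dr}_{t}\geq(1-e^{-(n+1)t})\bar{\dr}$.

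For the upper bound, fix $T>0$ and let $(p,\bar{t})\in\Omega\times(0,T]$ be where $\max_{\Omega\times[0,T]}\tilde{u}$ is attained (or an Omori--Yau almost-maximum). At such a point $\ddb\tilde{u}\leq 0$, so from \eqref{eq:cma2} and $\tilde{\dr}_{\bar{t}}\leq\max(c,1)\bar{\dr}$,
\begin{equation*}
0\leq\partial_{t}\tilde{u}(p,\bar{t})\leq n\log\max(c,1)+\sup_{\Omega}f-(n+1)\tilde{u}(p,\bar{t}),
\end{equation*}
whence $\tilde{u}(p,\bar{t})\leq\frac{1}{n+1}\big(n\log\max(c,1)+\sup_{\Omega}f\big)$; together with $\tilde{u}(\cdot,0)\equiv 0$ this bounds $\sup_{\Omega\times[0,T]}\tilde{u}$ independently of $T$. (In the non-attained case one argues along a sequence $p_{k}$ with $\ddb\tilde{u}(p_{k},\bar{t})\leq\frac{1}{k}\tilde{\dr}(p_{k},\bar{t})$, using the uniform bound of $\tilde{\dr}$ by $\bar{\dr}$ to make the error term vanish and the boundedness of $\partial^{2}_{t}\tilde{u}$ to rule out $\bar{t}>0$, exactly as in Lemma~\ref{lem-C0}.)

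For the lower bound I would split the time axis at $t=1$. On $[0,1]$ the bound $|\tilde{u}|\leq M$ follows from the explicit two-sided estimate \eqref{eq:C0-precise} on the unnormalized potential $u$, which is bounded on every compact time interval, together with the relation between $\tilde{u}$ and $u$ induced by \eqref{eq:normal} (a rescaling by $e^{-(n+1)t}$, the time reparametrization $s=\frac{e^{(n+1)t}-1}{n+1}$, and an explicit correction depending only on $t$, all bounded on compact time intervals). For $t\geq 1$ we have $\tilde{\dr}_{t}\geq(1-e^{-(n+1)})\bar{\dr}$, so at a spatial minimum $(q,\bar{t})$ with $\bar{t}>1$, \eqref{eq:cma2} gives
\begin{equation*}
0\geq\partial_{t}\tilde{u}(q,\bar{t})\geq n\log(1-e^{-(n+1)})+\inf_{\Omega}f-(n+1)\tilde{u}(q,\bar{t}),
\end{equation*}
hence $\tilde{u}(q,\bar{t})\geq\frac{1}{n+1}\big(n\log(1-e^{-(n+1)})+\inf_{\Omega}f\big)$; combined with $\inf_{\Omega}\tilde{u}(\cdot,1)\geq -M$ this bounds $\inf_{\Omega\times[1,\infty)}\tilde{u}$ from below. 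Putting the two regimes together with the upper bound yields \eqref{eq:C0-nor}.

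The main obstacle is precisely the degeneracy of the background form $\tilde{\dr}_{t}$ at $t=0$, where $\log(\tilde{\dr}_{t}^{n}/\bar{\dr}^{n})\to-\infty$; this is why the lower bound cannot come from a single global maximum-principle sweep and must instead borrow the already-established finite-time control \eqref{eq:C0-precise}, after which the large-time part is pure ODE comparison driven by the damping term. The remaining work is routine: verifying that the differential inequalities for $\sup_{\Omega}\tilde{u}$ and $\inf_{\Omega}\tilde{u}$ hold in the barrier sense inside the Omori--Yau framework -- i.e.\ controlling $\partial^{2}_{t}\tilde{u}$ along the almost-extremal sequences as in Lemma~\ref{lem-C0}.
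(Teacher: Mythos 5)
Your proof is correct and follows essentially the same route as the paper: both rest on the Omori--Yau maximal principle applied to \eqref{eq:cma2} together with the two-sided comparison of $\tilde{\dr}_{t}$ with $\bar{\dr}$, the damping term $-(n+1)\tilde{u}$ supplying the uniformity in $t$. The only organizational difference is that the paper records a single explicit variation-of-constants bound, e.g. $e^{-(n+1)t}\int_{0}^{t}ne^{(n+1)s}\log(1-e^{-(n+1)s})\,ds\leq\tilde{u}(t)$, whose integral converges despite the degeneracy at $s=0$ and stays bounded as $t\to\infty$, whereas you split the time axis at $t=1$ and borrow the finite-time estimate \eqref{eq:C0-precise} there; both devices address the same degeneracy of $\tilde{\dr}_{t}$ at $t=0$.
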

\begin{proof}
By Lemma \ref{lem-C0} and the relation \eqref{eq:normal} we know that for each $t>0$ the solution $\tilde{u}(t)$ is bounded by a constant depending on $t.$
For each $t>0$ we still assume the maximum and minimum can be attained at some points otherwise we use the argument of limiting sequence. Similar to \eqref{eq:C0-precise} we have the following inequality with the same constant $c:$
\begin{align*}
&-(1+e^{-(n+1)t})|f|_{C^{0}}+e^{-(n+1)t}\int_{0}^{t}ne^{(n+1)s}\log(1-e^{-(n+1)s})ds\leq\tilde{u}(t)\\ \leq
&(1+e^{-(n+1)t})|f|_{C^{0}}+e^{-(n+1)t}\int_{0}^{t}ne^{(n+1)s}\log(ce^{-(n+1)s}+(1-e^{-(n+1)s}))ds,
\end{align*}
thus the uniform estimate for $\tilde{u}(t)$ follows.
\end{proof}
Now we begin to establish the estimate for $\dot{\tilde{u}}:$
\begin{lemma}\label{lem-time dev-nor}
There exist two uniform constants $C_{1},C_{2}>0$ and $t_{0}>0$ such that for all $t>t_{0}$,
\begin{equation}\label{eq:time-dev-nor}
-C_{1}\leq\dot{\tilde{u}}(t)\leq C_{2}te^{-(n+1)t}.
\end{equation}
\end{lemma}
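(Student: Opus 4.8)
The plan is to argue, as in the earlier lemmas, on the normalized complex Monge-Amp\'{e}re flow \eqref{eq:cma2}, deriving both inequalities by applying the Omori-Yau maximal principle (Proposition \ref{prop-max}) to two auxiliary functions chosen so that the heat operator $\dt-\Delta$ of the evolving metric $\tilde{\dr}(t)$ has a definite sign on them. I shall use the uniform estimate $|\tilde{u}|\leq C$ of Lemma \ref{lem-C0-nor}, the identity $\tilde{\dr}_t=\bar{\dr}+e^{-(n+1)t}(\dr_0-\bar{\dr})$ (so that $\dt{\tilde{\dr}_t}=-(n+1)(\tilde{\dr}_t-\bar{\dr})$), and the evolution equation obtained by differentiating \eqref{eq:cma2} in $t$,
\[
(\dt-\Delta)\dot{\tilde{u}}=tr_{\tilde{\dr}}\big(\dt{\tilde{\dr}_t}\big)-(n+1)\dot{\tilde{u}} .
\]
Wherever the behavior at $t=0$ is delicate I will, as in Lemma \ref{lem-C0}, run the argument for the $\e$-regularized flow $\tilde{\dr}_\e(t)=e^{-(n+1)t}\dr_\e(s(t))$, whose initial metric $\dr_{0,\e}=\dr_0+(n+1)\e\bar{\dr}$ is complete with bounded curvature and for which $|\tilde{u}_\e|\leq C$ uniformly in $\e$, and then let $\e\to0$; and I freely assume, as elsewhere, that extrema are attained.

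For the \emph{upper bound} I would put $s(t)=\frac{e^{(n+1)t}-1}{n+1}$ (exactly the time reparametrization of \eqref{eq:normal}) and consider
\[
\Phi:=s(t)\,\dot{\tilde{u}}-\tilde{u}-nt .
\]
The weight $s(t)$ is chosen precisely so that, expanding $(\dt-\Delta)\Phi$ by means of the displayed equation and $(\dt-\Delta)\tilde{u}=\dot{\tilde{u}}-n+tr_{\tilde{\dr}}\tilde{\dr}_t$, the coefficient of $\dot{\tilde{u}}$ is $\dot{s}-(n+1)s-1=0$, and the remaining terms collapse, using $e^{(n+1)t}\tilde{\dr}_t=\dr_0+(e^{(n+1)t}-1)\bar{\dr}$, to
\[
(\dt-\Delta)\Phi=tr_{\tilde{\dr}}\big(-e^{(n+1)t}\tilde{\dr}_t+(e^{(n+1)t}-1)\bar{\dr}\big)=-\,tr_{\tilde{\dr}}\dr_0<0 .
\]
Since $\Phi\equiv0$ at $t=0$ (at the $\e$-level $\dot{\tilde{u}}_\e(\cdot,0)$ is bounded, $s(0)=0$, $\tilde{u}_\e(0)=0$), the maximal principle yields $\Phi\leq0$, i.e. $\dot{\tilde{u}}(t)\leq\frac{\tilde{u}(t)+nt}{s(t)}$ for $t>0$. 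Now $|\tilde{u}|\leq C$ and $\frac{1}{s(t)}=\frac{(n+1)e^{-(n+1)t}}{1-e^{-(n+1)t}}\leq 2(n+1)e^{-(n+1)t}$ for $t$ large give $\dot{\tilde{u}}(t)\leq C_2\,t\,e^{-(n+1)t}$ once $t\geq t_0$.

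For the \emph{lower bound} I would instead take
\[
G:=\dot{\tilde{u}}+(n+1)\tilde{u}=\log\frac{\tilde{\dr}^n}{\bar{\dr}^n}+f ,
\]
the second equality being \eqref{eq:cma2} itself. From $\dot{G}=tr_{\tilde{\dr}}\big(\dt{\tilde{\dr}_t}\big)+\Delta\dot{\tilde{u}}$ and $tr_{\tilde{\dr}}\tilde{\dr}_t=n-\Delta\tilde{u}$ one gets
\[
(\dt-\Delta)G=(n+1)\big(tr_{\tilde{\dr}}\bar{\dr}-n\big)\geq n(n+1)\big(e^{(f-G)/n}-1\big),
\]
the last step by the arithmetic-geometric mean inequality applied to the eigenvalues of $\bar{\dr}$ with respect to $\tilde{\dr}$. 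Now fix the $t_0$ of the statement and apply the maximal principle on $\Omega\times[t_0,T]$, where $\tilde{\dr}(t)$ is complete with bounded curvature and $G$ bounded below for each finite $T$ by Lemmas \ref{lem-C2} and \ref{lem-high}. If the infimum is approached at a time $\bar{t}>t_0$, the differential inequality forces $e^{(f-G)/n}-1\leq o(1)$, hence $G\geq\inf_\Omega f-o(1)$; otherwise it is attained at $t=t_0$, where $G(\cdot,t_0)\geq-\bar{C}(t_0)>-\infty$ by Lemma \ref{lem-C2}. In either case $G\geq\min(\inf_\Omega f,-\bar{C}(t_0))$ on $\Omega\times[t_0,T]$ with a bound \emph{not depending on $T$}, so $T\to\infty$ gives a uniform lower bound for $G$ on $[t_0,\infty)$; since $\dot{\tilde{u}}=G-(n+1)\tilde{u}$ and $|\tilde{u}|\leq C$, this yields $\dot{\tilde{u}}\geq-C_1$ for $t>t_0$.

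The genuine obstacle is the degeneracy at $t=0$: there $\tilde{\dr}(0)=\dr_0$ is incomplete and $\dot{\tilde{u}}(0)=\log(\dr_0^n/\bar{\dr}^n)+f$ is unbounded below near $\partial\Omega$, so neither auxiliary function is controlled at the initial time in the naive way. For the upper bound this is dealt with by the $\e$-regularization, where $\dr_{0,\e}$ is complete and $\Phi_\e$ vanishes identically at $t=0$; for the lower bound one instead simply stays on $[t_0,T]$, the point being that the estimate extracted there is $T$-independent and so survives $T\to\infty$. A more cosmetic subtlety is that the upper bound claims not mere boundedness of $\dot{\tilde{u}}$ but the sharp decay $t\,e^{-(n+1)t}$: this rests entirely on the precise choice of $s(t)$, which is exactly what cancels the $\dot{\tilde{u}}$-coefficient and leaves only the manifestly negative $-tr_{\tilde{\dr}}\dr_0$.
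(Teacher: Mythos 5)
Your proposal is correct and follows essentially the same route as the paper: the upper bound uses the identical auxiliary function $(e^{(n+1)t}-1)\dot{\tilde{u}}-(n+1)\tilde{u}-n(n+1)t$ (your $\Phi$ is this divided by $n+1$), with the same cancellation yielding $-tr_{\tilde{\dr}}\dr_{0}\leq 0$. For the lower bound the paper works with $(1-e^{(n+1)(t-T)})\dot{\tilde{u}}+(n+1)\tilde{u}$ for finite $T$ and obtains $(n+1)(tr_{\tilde{\dr}}\tilde{\dr}_{T}-n)$ before letting $T\to\infty$, which is exactly the $T\to\infty$ specialization you take from the outset with $G=\dot{\tilde{u}}+(n+1)\tilde{u}$ and the trace inequality $tr_{\tilde{\dr}}\bar{\dr}\geq n e^{(f-G)/n}$ --- a cosmetic variant of the same Omori-Yau-at-the-minimum argument.
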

\begin{proof}
First by \eqref{eq:cma2}, we have the evolution equation for $\dot{\tilde{u}}:$
\begin{equation}\label{eq:heat-time dev}
(\dt-\Delta)\dot{\tilde{u}}=tr_{\tilde{\dr}}\dot{\tilde{\dr}}_{t}-(n+1)\dot{\tilde{u}}=
(n+1)e^{-(n+1)t}tr_{\tilde{\dr}}(\bar{\dr}-\dr_{0})-(n+1)\dot{\tilde{u}}.
\end{equation}
Then we have the following inequality:
\begin{align*}
&(\dt-\Delta)((e^{(n+1)t}-1)\dot{\tilde{u}}-(n+1)\tilde{u}-n(n+1)t)\\
=&(n+1)\left((1-e^{-(n+1)t})tr_{\tilde{\dr}}(\bar{\dr}-\dr_{0})-(e^{(n+1)t}-1)\dot{\tilde{u}}+e^{(n+1)t}\dot{\tilde{u}}\right)-(n+1)\dot{\tilde{u}}\\
&+(n+1)(n-tr_{\tilde{\dr}}\tilde{\dr}_{t})-n(n+1)\\
=&-(n+1)tr_{\tilde{\dr}}\dr_{0}\leq 0.
\end{align*}
As the solution is smooth and with uniformly bounded derivatives for some positive time $t$ then combine Lemma \ref{lem-C0-nor} and Omori-Yau maximal principle we conclude the upper bound for $\dot{\tilde{u}}.$ On the other hand apply \eqref{eq:heat-time dev} again, for a large positive $T$ we have
\begin{align*}
&(\dt-\Delta)((1-e^{(n+1)(t-T)})\dot{\tilde{u}}+(n+1)\tilde{u})\\
=&(n+1)\left((1-e^{(n+1)(t-T)})e^{-(n+1)t}tr_{\tilde{\dr}}(\bar{\dr}-\dr_{0})-n+tr_{\tilde{\dr}}\tilde{\dr}_{t}\right)\\
=&(n+1)tr_{\tilde{\dr}}\tilde{\dr}_{T}-n(n+1).
\end{align*}
By the argument of the upper bound of $\dot{\tilde{u}}$ if $t\geq T$ it holds that $(1-e^{(n+1)(t-T)})\dot{\tilde{u}}+(n+1)\tilde{u}$ is uniformly bounded from below. Now assume that its minimum could be attained at some $t'<T$ and some point $p.$ Then by Omori-Yau maximal principle at $(p,t')$ it holds that $tr_{\tilde{\dr}}\tilde{\dr}_{T}\leq n,$ which implies that $\tilde{\dr}\geq C\tilde{\dr}_{T}.$ It follows that
\begin{align*}
&(1-e^{(n+1)(t-T)})\dot{\tilde{u}}(t)+(n+1)\tilde{u}(t)\\ \geq&(1-e^{(n+1)(t'-T)})\dot{\tilde{u}}(p,t')+(n+1)\tilde{u}(p,t')\\
=&(1-e^{(n+1)(t'-T)})(\log\frac{\tilde{\dr}^{n}}{\bar{\dr}^{n}}+f-(n+1)\tilde{u})(p,t')+(n+1)\tilde{u}(p,t')\\
\geq&(1-e^{(n+1)(t'-T)})\log\frac{\tilde{\dr}_{T}^{n}}{\bar{\dr}^{n}}+C\geq C,
\end{align*}
when $t\in [0,T].$ As this normalized flow exists for infinite time, we can choose $T=\infty,$ thus the lower bound for $\dot{\tilde{u}}$ follows.
\end{proof}
The Laplacian estimate could be derived from a slight modification of Chern-Lu Inequality:
\begin{lemma}\label{lem-C2-nor}
There exists a uniform constant $C_{3}>1$ such that for any $t>t_{0}>0,$
\begin{equation}\label{eq:C2-nor}
C_{3}^{-1}\bar{\dr}\leq\tilde{\dr}(t)\leq C_{3}\bar{\dr}.
\end{equation}
\end{lemma}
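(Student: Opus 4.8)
The plan is to adapt the Chern--Lu argument of Lemma~\ref{lem-C2} to the normalized flow, using the time-uniform bounds of Lemmas~\ref{lem-C0-nor} and~\ref{lem-time dev-nor} to build the barrier, and then to turn the resulting one-sided trace estimate into a two-sided metric bound via the Monge--Amp\`ere equation~\eqref{eq:cma2}. Enlarging $t_{0}$ if necessary we may assume the conclusion of Lemma~\ref{lem-time dev-nor} holds on $[t_{0},\infty)$; then for $t\geq t_{0}$ the reference form $\tilde{\dr}_{t}=e^{-(n+1)t}\dr_{0}+(1-e^{-(n+1)t})\bar{\dr}$ satisfies $c_{0}\bar{\dr}\leq\tilde{\dr}_{t}\leq(1+c)\bar{\dr}$, where $c_{0}:=1-e^{-(n+1)t_{0}}>0$ and $c$ is the constant with $\dr_{0}\leq c\bar{\dr}$.

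First I would bound $tr_{\tilde{\dr}}\bar{\dr}$ from above. Since $\bar{\dr}$ has bisectional curvature bounded above by some $k\geq0$ (as in Lemma~\ref{lem-C2}, using~\eqref{eq:cur}), running the Chern--Lu computation~\eqref{eq:C-L} for the normalized flow produces an extra normalization term,
\[(\dt-\Delta)\log tr_{\tilde{\dr}}\bar{\dr}\leq k\,tr_{\tilde{\dr}}\bar{\dr}+(n+1),\]
while $\Delta\tilde{u}=n-tr_{\tilde{\dr}}\tilde{\dr}_{t}$ gives, for $t\geq t_{0}$,
\[(\dt-\Delta)\tilde{u}=\dot{\tilde{u}}-n+tr_{\tilde{\dr}}\tilde{\dr}_{t}\geq\dot{\tilde{u}}-n+c_{0}\,tr_{\tilde{\dr}}\bar{\dr}.\]
Hence for $H:=\log tr_{\tilde{\dr}}\bar{\dr}-\lambda\tilde{u}$ one obtains
\[(\dt-\Delta)H\leq(k-\lambda c_{0})\,tr_{\tilde{\dr}}\bar{\dr}+(n+1)+\lambda n-\lambda\dot{\tilde{u}}.\]
Choosing $\lambda$ so large that $k-\lambda c_{0}\leq-1$ and using $\dot{\tilde{u}}\geq-C_{1}$ from Lemma~\ref{lem-time dev-nor}, this becomes $(\dt-\Delta)H\leq-tr_{\tilde{\dr}}\bar{\dr}+C$ with $C$ independent of $t$. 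I would then apply Proposition~\ref{prop-max} on $\Omega\times[t_{0},T]$ for arbitrary $T>t_{0}$: the (non-uniform) bounds of Lemma~\ref{lem-C2} transported through~\eqref{eq:normal} show $\tilde{\dr}(t)$ is comparable to $\bar{\dr}$ there, so $H$ is bounded above on the slab, and its maximum is either attained on the initial slice $t=t_{0}$, where $H$ is bounded, or at some $\bar{t}>t_{0}$, where the differential inequality forces $tr_{\tilde{\dr}}\bar{\dr}\leq C$ and then, by $|\tilde{u}|\leq C$ from Lemma~\ref{lem-C0-nor}, $H\leq C$. Letting $T\to\infty$ yields $tr_{\tilde{\dr}}\bar{\dr}\leq C_{3}$ on $\Omega\times[t_{0},\infty)$, i.e.\ $\tilde{\dr}\geq C_{3}^{-1}\bar{\dr}$.

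For the reverse inequality I would use the equation together with a volume comparison. From~\eqref{eq:cma2} one has $\tilde{\dr}^{n}=\bar{\dr}^{n}e^{\dot{\tilde{u}}+(n+1)\tilde{u}-f}$, and since $\dot{\tilde{u}}$, $\tilde{u}$, $f$ are all uniformly bounded on $[t_{0},\infty)$ by Lemmas~\ref{lem-C0-nor}--\ref{lem-time dev-nor} and the regularity of $f$, this gives $C^{-1}\bar{\dr}^{n}\leq\tilde{\dr}^{n}\leq C\bar{\dr}^{n}$. Writing $\lambda_{1}\leq\cdots\leq\lambda_{n}$ for the eigenvalues of $\tilde{\dr}$ relative to $\bar{\dr}$, the bound $\tilde{\dr}\geq C_{3}^{-1}\bar{\dr}$ gives $\lambda_{i}\geq C_{3}^{-1}$, and then $\prod_{i}\lambda_{i}\leq C$ forces $\lambda_{n}\leq C\,C_{3}^{n-1}$, so $tr_{\bar{\dr}}\tilde{\dr}\leq nC\,C_{3}^{n-1}$. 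Enlarging $C_{3}$ once more delivers the asserted two-sided bound with a uniform $C_{3}>1$.

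The main obstacle is getting the estimate uniform in $t$: Lemma~\ref{lem-C2} cannot simply be pushed through~\eqref{eq:normal}, since its constants degenerate as $t\to\infty$. Two features make the normalized argument work. First, the normalization term $-(n+1)\tilde{\dr}$ in~\eqref{eq:krf2} enters $(\dt-\Delta)\tilde{u}$ with the favorable sign through $-tr_{\tilde{\dr}}\tilde{\dr}_{t}$, and for $t\geq t_{0}$ the background $\tilde{\dr}_{t}$ dominates a fixed multiple of $\bar{\dr}$, so $\tilde{u}$ is a genuine barrier absorbing $k\,tr_{\tilde{\dr}}\bar{\dr}$; in the unnormalized flow this role was played by $\dot{u}_{\e}$, whose evolution carries a factor that decays like $e^{-(n+1)t}$ and so is too weak for large $t$. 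Second, the leftover terms $\dot{\tilde{u}}$ and $\tilde{u}$ are controlled by the time-uniform Lemmas~\ref{lem-C0-nor} and~\ref{lem-time dev-nor}. Finally, one should keep in mind that Proposition~\ref{prop-max} requires a bounded-curvature solution equivalent to a complete metric, which is available only for $t$ bounded away from $0$; this forces the restriction $t>t_{0}$ in the statement.
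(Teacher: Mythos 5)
Your proposal is correct and follows essentially the same route as the paper: a Chern--Lu inequality for the normalized flow combined with a potential-type barrier and the Omori--Yau maximal principle for the lower bound $\bar{\dr}\leq C\tilde{\dr}$, then the volume-form identity $\log(\tilde{\dr}^{n}/\bar{\dr}^{n})=\dot{\tilde{u}}+(n+1)\tilde{u}-f\leq C$ plus the eigenvalue argument for the upper bound. The only (harmless) difference is the barrier: the paper uses $\frac{k+2}{n+1}(\dot{\tilde{u}}+(n+1)\tilde{u})$, whose evolution produces $tr_{\tilde{\dr}}\bar{\dr}$ exactly with $t$-independent constants, whereas you use $\lambda\tilde{u}$ alone and absorb $tr_{\tilde{\dr}}\tilde{\dr}_{t}\geq(1-e^{-(n+1)t_{0}})\,tr_{\tilde{\dr}}\bar{\dr}$ together with the uniform lower bound on $\dot{\tilde{u}}$.
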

\begin{proof}
For \eqref{eq:C-L} in Proposition \ref{lem-ineq}, we have the following modification version in in the settings of normalized \ka-Ricci flow with same conditions:
\begin{equation}\label{eq:C-L-nor}
(\dt-\Delta)\log tr_{\tilde{\dr}(t)}\dr_{2}\leq k_{2}tr_{\tilde{\dr}(t)}\dr_{2}-n-1.
\end{equation}
Now we have the following inequality:
\begin{align*}
&(\dt-\Delta)(\log tr_{\tilde{\dr}}\bar{\dr}-\frac{k+2}{n+1}(\dot{\tilde{u}}+(n+1)\tilde{u}))\\
\leq&k tr_{\tilde{\dr}}\bar{\dr}-n-1-(k+2)(tr_{\tilde{\dr}}\bar{\dr}-n)\leq -2 tr_{\tilde{\dr}}\bar{\dr}+nk+n-1,
\end{align*}
where $k$ is a positive upper bound of the bisectional curvature of $\bar{\dr}.$
Still by Omori-Yau maximal principle and assuming that the maximum could be attained at some point $(p,t')$ for $t>t_{0},$
it holds that $tr_{\tilde{\dr}}\bar{\dr}(p,t')\leq\frac{nk+n-1}{2},$ then it follows that
$$\log tr_{\tilde{\dr}}\bar{\dr}-\frac{k+2}{n+1}(\dot{\tilde{u}}+(n+1)\tilde{u})\leq(\log tr_{\tilde{\dr}}\bar{\dr}-\frac{k+2}{n+1}(\dot{\tilde{u}}+(n+1)\tilde{u}))(p,t')\leq C,$$
by the above two lemmas and then $\bar{\dr}\leq C\tilde{\dr}(t).$ For the other direction, note that
from \eqref{eq:cma2} we have $$\log\frac{\tilde{\dr}^{n}}{\bar{\dr}^{n}}=\dot{\tilde{u}}+(n+1)\tilde{u}-f\leq C,$$
thus the other side bound and consequently this lemma follow.
\end{proof}
By standard procedures we can establish the uniform high order estimates as $t$ tends to infinity. As it is shown that $\dot{\tilde{u}}\leq C_{2}te^{-(n+1)t}$
we find that $\tilde{u}$ is almost decreasing as $t$ tends to infinity. Considering the uniform boundness of $\tilde{u}$ by Lemma \ref{lem-C0-nor}, $\tilde{u}(t)$ converges to a unique bounded limit function $\tilde{u}_{\infty}$ in $C^{0}$ norm. Moreover this convergence is essentially smooth by the uniform high order estimates. Thus the right hand side of the equation
$$\dt\tilde{u}=\log\frac{(\tilde{\dr}_{t}+\ddb\tilde{u})^{n}}{\bar{\dr}^{n}}-(n+1)\tilde{u}+f$$
uniformly converges to a smooth function on $\Omega$ which forces $\dot{\tilde{u}}$ converges to $0$ as $\tilde{u}(t)$ converges. It follows that
the limit function $\tilde{u}_{\infty}$ satisfies the complex Monge-Amp\'{e}re equation:
\begin{equation}\label{eq:limit cma}
\log\frac{(\bar{\dr}+\ddb\tilde{u}_{\infty})^{n}}{\bar{\dr}^{n}}=(n+1)\tilde{u}_{\infty}-f,
\end{equation}
 which implies that
$$Ric(\bar{\dr}+\ddb\tilde{u}_{\infty})=-(n+1)(\bar{\dr}+\ddb\tilde{u}_{\infty}),$$
thus we have the following proposition:
\begin{proposition}\label{prop-nor-converge}
There exist a unique family of solutions $\tilde{u}(t)$ to the normalized complex Monge-Amp\'{e}re flow \eqref{eq:cma2} and they converge smoothly to a
function $\tilde{u}_{\infty}$ which solves the complex Monge-Amp\'{e}re equation \eqref{eq:limit cma}. Consequently there exist a family of solutions $\tilde{\dr}(t)$ (which may not be unique) to the normalized \ka-Ricci flow \eqref{eq:krf2} on $[0,+\infty)\times\Omega.$ Moreover this family of solutions are simultaneously complete and converge to the complete \ka-Einstein metric $\tilde{\dr}_{\infty}$ in $C^{\infty}$ sense.
\end{proposition}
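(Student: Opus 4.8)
The plan is to assemble Lemmas \ref{lem-C0-nor}, \ref{lem-time dev-nor} and \ref{lem-C2-nor} with the uniqueness already obtained for the unnormalized flow, following the discussion preceding the statement. First, uniqueness of the family $\tilde{u}(t)$ solving \eqref{eq:cma2} in $C^{0}([0,+\infty)\times\Omega)\cap C^{\infty}((0,+\infty)\times\Omega)$ follows from Proposition \ref{prop-unique} together with the explicit time reparametrization \eqref{eq:normal}: any two such solutions pull back under \eqref{eq:normal} to solutions of \eqref{eq:cma1} in the corresponding class on $[0,+\infty)\times\Omega$, and these must coincide.

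Next I would establish $C^{0}$-convergence of $\tilde{u}(t)$. By Lemma \ref{lem-time dev-nor}, $\dot{\tilde{u}}(t)\le C_{2}te^{-(n+1)t}$ for $t>t_{0}$; since $s\mapsto se^{-(n+1)s}$ is integrable on $[0,+\infty)$, the function $t\mapsto\tilde{u}(\cdot,t)-C_{2}\int_{0}^{t}se^{-(n+1)s}\,ds$ is nonincreasing for $t>t_{0}$ and bounded below by Lemma \ref{lem-C0-nor}. Hence $\tilde{u}(\cdot,t)$ converges uniformly on $\Omega$, as $t\to\infty$, to a bounded limit function $\tilde{u}_{\infty}$.

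To upgrade this to smooth convergence I would use the uniform-in-$t$ equivalence $C_{3}^{-1}\bar{\dr}\le\tilde{\dr}(t)\le C_{3}\bar{\dr}$ of Lemma \ref{lem-C2-nor}. Combined with the bounded geometry of $\bar{\dr}$ near $\partial\Omega$ established above via \eqref{eq:cur}, Shi-type local derivative estimates and interior parabolic Schauder estimates for the quasilinear equation \eqref{eq:cma2} yield bounds on $\tilde{u}(t)$ in $C^{k}(\Omega,\bar{\dr})$ that are uniform in $t$ for $t\ge t_{0}$. Thus every sequence $\tilde{u}(t_{j})$ has a subsequence converging in $C^{\infty}_{loc}(\Omega)$, and since the $C^{0}$-limit $\tilde{u}_{\infty}$ is already determined, the whole family converges to $\tilde{u}_{\infty}$ in $C^{\infty}_{loc}(\Omega)$. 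Because $\tilde{\dr}_{t}=e^{-(n+1)t}\dr_{0}+(1-e^{-(n+1)t})\bar{\dr}\to\bar{\dr}$ smoothly, the right-hand side of \eqref{eq:cma2} converges; since it equals $\dot{\tilde{u}}$ while $\tilde{u}(t)$ itself converges, necessarily $\dot{\tilde{u}}(t)\to 0$. Passing to the limit in \eqref{eq:cma2} then shows that $\tilde{u}_{\infty}$ solves \eqref{eq:limit cma}, which is equivalent to $Ric(\bar{\dr}+\ddb\tilde{u}_{\infty})=-(n+1)(\bar{\dr}+\ddb\tilde{u}_{\infty})$.

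Finally I would set $\tilde{\dr}_{\infty}:=\bar{\dr}+\ddb\tilde{u}_{\infty}$ and conclude. Passing Lemma \ref{lem-C2-nor} to the limit gives $C_{3}^{-1}\bar{\dr}\le\tilde{\dr}_{\infty}\le C_{3}\bar{\dr}$, so $\tilde{\dr}_{\infty}$ is complete; by the uniqueness of the complete \ka-Einstein metric with negative Ricci curvature of Cheng-Yau \cite{CY} it coincides with their metric. The solutions $\tilde{\dr}(t)$ of \eqref{eq:krf2} are produced from $\tilde{u}(t)$ by $\tilde{\dr}(t)=\tilde{\dr}_{t}+\ddb\tilde{u}(t)$, and need not be unique for the same reason noted after Theorem \ref{thm-long time} (a solution of \eqref{eq:krf2} need not arise from a scalar potential); the $C^{\infty}$ convergence $\tilde{\dr}(t)\to\tilde{\dr}_{\infty}$ follows from that of $\tilde{u}(t)$ together with $\tilde{\dr}_{t}\to\bar{\dr}$. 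Simultaneous completeness of $\tilde{\dr}(t)$ for every $t>0$ is immediate from \eqref{eq:normal} and Theorem \ref{thm-long time}, since rescaling by the positive constant $e^{-(n+1)t}$ preserves completeness. I expect the main obstacle to be the uniform-in-time higher-order estimates: the a priori bounds obtained so far are either $C^{0}$, $\dot{\tilde{u}}$ and Laplacian bounds, or higher-order bounds valid only on compact time intervals (Lemma \ref{lem-high}), and one must check carefully that the uniform metric equivalence of Lemma \ref{lem-C2-nor} genuinely propagates to uniform $C^{k}$ control as $t\to\infty$, so that the limiting equation and the conclusion $\dot{\tilde{u}}(t)\to 0$ are rigorously justified.
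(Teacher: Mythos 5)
Your proposal is correct and follows essentially the same route as the paper: monotonicity of $\tilde{u}$ up to the integrable correction $C_{2}te^{-(n+1)t}$ plus the uniform $C^{0}$ bound gives convergence, the uniform metric equivalence of Lemma \ref{lem-C2-nor} is bootstrapped to higher-order bounds to make the convergence smooth, $\dot{\tilde{u}}\to 0$ then forces the limit to solve \eqref{eq:limit cma}, and uniqueness/completeness are inherited from the unnormalized flow via \eqref{eq:normal}. You are in fact more explicit than the paper at the one delicate point (propagating the Laplacian estimate to uniform-in-time $C^{k}$ control, which the paper dispatches as ``standard procedures''), and your added identification of the limit with the Cheng--Yau metric is a harmless strengthening.
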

Thus we conclude the limit behavior of the normalized simultaneously complete \ka-Ricci flow on smooth pseudoconvex domains and the proof of Theorem \ref{mainthm1} is complete.

\section{\ka-Ricci flow on general manifolds with a complete background metric}

In this section we will complete the proof of Theorem \ref{mainthm2}. In this situation we will consider $\dr_{t}:=\dr_{0}-tRic(\dr_{M})$ as the new evolving
background metric. First we still need to verify that this background metric has uniformly bounded bisectional curvature for each $t>0.$ Similar to \eqref{eq:cur}, we have the following bisectional curvature expansion of $\bar{g}:=g_{0}+h$ where $g_{0}$ and $h$ are the metric tensor of $\dr_{0}$ and $-tRic(\dr_{M}):$
$$
\bar{R}_{i\bar{j}k\bar{l}}=-\partial_{k\bar{l}}g_{0i\bar{j}}-\partial_{k\bar{l}}h_{i\bar{j}}\\
+\bar{g}^{p\bar{q}}(\partial_{k}g_{0i\bar{q}}+\partial_{k}h_{i\bar{q}})(\partial_{\bar{l}}g_{0p\bar{j}}+\partial_{\bar{l}}h_{p\bar{j}}).
$$
By the uniformly bounded properties of the covariant derivatives of $\dr_{0}$ and $\dr_{M},$ it is implied that the bisectional curvature of $\bar{g}$
is uniformly bounded. This enables us to reuse the approximation method in the proof of Theorem \ref{mainthm1}. First set $\dr(t)=\dr_{t}+\ddb u$ as the flow solution then the \ka-Ricci flow equation can be transformed to the following complex Monge-Amp\'{e}re flow equation:
\begin{equation}\label{eq:cma-general}
\left\{ \begin{array}{rcl}
\dt u&=&\log\frac{(\dr_{t}+\ddb u)^{n}}{\dr_{M}^{n}}\\
u(0)&=&0 \end{array}\right.
\end{equation}
To overcome the difficulty at $t=0,$ similarly, we set $\dr_{t,\e}:=\dr_{0}-(t+\e)Ric(\dr_{M})$ and consider the following approximating flow equation:
\begin{equation}\label{eq:cma-general-app}
\left\{ \begin{array}{rcl}
\dt u_{\e}&=&\log\frac{(\dr_{t,\e}+\ddb u_{\e})^{n}}{\dr_{M}^{n}}\\
u_{\e}(0)&=&0 \end{array}\right.
\end{equation}
By the argument above for any closed time interval $[0,T]$ the evolving background metric $\dr_{t,\e}$ has uniformly bounded bisectional curvature, which enables us to apply the Omori-Yau maximal principle again to derive a priori estimates for $u_{\e}.$ As $C_{1}\dr_{M}\leq -Ric(\dr_{M})\leq C_{2}\dr_{M}$
for two positive constants $C_{1},C_{2}$ by the assumptions, it follows that $$C_{1}(t+\e)\dr_{M}\leq \dr_{t,\e}\leq (c'+C_{2}(t+\e))\dr_{M}$$ where $c'>0$ is
the constant such that $\dr_{0}\leq c'\dr_{M}$ as $\dr_{M}$ is complete. Similar to Lemma \ref{lem-C0} by applying the Omori-Yau maximal principle we have the following estimate:
\begin{equation}\label{eq:general C0}
n\int_{0}^{t}\log C_{1}(s+\e)ds\leq u_{\e}\leq n\int_{0}^{t}\log(c'+C_{2}(s+\e))ds
\end{equation}
This estimate also implies that $|u_{\e}(t)|\leq C(t)$ which is independent of $\e.$ Next we need to derive a uniform estimate for $\dot{u}_{\e}.$ Similar to
Lemma \ref{lem-time dev}, we have the following lemma:
\begin{lemma}\label{lem-general time dev}
There exist constants $C_{1}(t),C_{2}(t)>0$ which only depend on $t$ such that
\begin{equation}\label{eq:general time dev}
n\log t-C_{1}(t)\leq\dot{u}_{\e}(t)\leq\frac{C_{2}(t)}{t}+n.
\end{equation}
\end{lemma}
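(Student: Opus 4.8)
The plan is to reproduce the argument of Lemma \ref{lem-time dev} with $-Ric(\dr_M)$ in place of $(n+1)\bar{\dr}.$ Differentiating \eqref{eq:cma-general-app} in $t$ and using that $\dot{\dr}_{t,\e}=-Ric(\dr_M)$ is constant in $t,$ one obtains the evolution identity
\[(\dt-\Delta_{\e})\dot{u}_{\e}=-tr_{\dr_{\e}}Ric(\dr_M),\]
where $\Delta_{\e}$ is the Laplacian of $\dr_{\e}(t)=\dr_{t,\e}+\ddb u_{\e},$ together with $\Delta_{\e}u_{\e}=n-tr_{\dr_{\e}}\dr_{t,\e}.$ For each fixed $\e>0$ the metric $\dr_{0,\e}=\dr_0-\e Ric(\dr_M)$ satisfies $\e C_0\dr_M\leq\dr_{0,\e}\leq(c'+\e C_2)\dr_M,$ so it is complete with bounded geometry, and $\dr_{\e}(t)$ is the corresponding bounded-curvature \ka-Ricci flow, which stays equivalent to $\dr_M$ on every $[0,T]\subset[0,T_{max}).$ This is precisely the setting in which Proposition \ref{prop-max} applies to $\dr_{\e}(t).$

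For the upper bound I would run the auxiliary function $\psi:=t\dot{u}_{\e}-u_{\e}-nt,$ which vanishes at $t=0.$ Combining the two identities above with $\dr_{t,\e}=\dr_0-(t+\e)Ric(\dr_M)$ gives
\[(\dt-\Delta_{\e})\psi=\e\,tr_{\dr_{\e}}Ric(\dr_M)-tr_{\dr_{\e}}\dr_0=-tr_{\dr_{\e}}\dr_{0,\e}<0,\]
since both $\dr_0$ and $-Ric(\dr_M)$ are positive, and at each fixed $t>0$ the term $tr_{\dr_{\e}}\dr_{0,\e}$ is bounded below by a positive constant because $\dr_{\e}(t)$ is equivalent to $\dr_M.$ As $\psi$ is bounded above on $M\times[0,T]$ for fixed $\e$ (again by that equivalence), Proposition \ref{prop-max}(ii) forces the near-maximum to occur at $\bar t=0,$ hence $\psi\leq 0,$ i.e. $t\dot{u}_{\e}\leq u_{\e}+nt.$ Together with the $\e$-independent bound $|u_{\e}(t)|\leq C(t)$ from \eqref{eq:general C0}, this gives $\dot{u}_{\e}\leq C_2(t)/t+n.$

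For the lower bound the fastest route is the parabolic Schwarz Lemma: since $\dr_{\e}(t)$ is a bounded-curvature \ka-Ricci flow equivalent to the complete metric $\dr_{0,\e}$ and $Ric(\dr_M)\leq-C_0\dr_M,$ Proposition \ref{prop-schwarz} gives $\dr_{\e}(t)^{n}\geq(C_0 t)^{n}\dr_M^{n},$ hence $\dot{u}_{\e}=\log(\dr_{\e}^{n}/\dr_M^{n})\geq n\log(C_0 t)\geq n\log t-C_1$ with $C_1$ independent of $\e$ and $t.$ Equivalently one may mirror Lemma \ref{lem-time dev} directly: the hypothesis on $Ric(\dr_M)$ and the arithmetic--geometric mean inequality give
\[(\dt-\Delta_{\e})(\dot{u}_{\e}-n\log t)\geq C_0 n\Big(\frac{\dr_M^{n}}{\dr_{\e}^{n}}\Big)^{1/n}-\frac{n}{t},\]
and since $\dot{u}_{\e}-n\log t\to+\infty$ as $t\to0^{+}$ for fixed $\e,$ its infimum over $M\times[0,T]$ is approached at a sequence $(p_k,\bar t)$ with $\bar t>0;$ by Proposition \ref{prop-max}(ii) this forces $\liminf_k(\dr_{\e}^{n}/\dr_M^{n})(p_k,\bar t)\geq(C_0\bar t)^{n},$ whence $\dot{u}_{\e}(p_k,\bar t)\geq n\log(C_0\bar t)+o(1)$ and $\inf_{M\times[0,T]}(\dot{u}_{\e}-n\log t)\geq n\log C_0.$

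I expect the only computational inputs to be the two evolution identities. The genuinely delicate point is the bookkeeping around Proposition \ref{prop-max}: one must first secure, for fixed $\e,$ a (non-uniform) one-sided bound on the auxiliary function from the bounded geometry of $\dr_{\e}(t)$ so that the maximum principle applies, and then check that the extremum cannot sit at $t=0$ in the upper-bound case, respectively must sit at some $\bar t>0$ in the lower-bound case, so that the resulting estimate is uniform in $\e;$ this is handled exactly as in Lemmas \ref{lem-C0} and \ref{lem-time dev}. Letting $\e\to0$ along the convergent subsequence then transfers the bound to $u.$
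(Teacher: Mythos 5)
Your proposal is correct and follows exactly the route the paper intends: the paper omits this proof entirely, stating only that it is ``almost the same'' as Lemma \ref{lem-time dev}, and your write-up is a faithful transcription of that argument with $(n+1)\bar{\dr}$ replaced by $-Ric(\dr_{M})$ (your computation $(\dt-\Delta_{\e})(t\dot{u}_{\e}-u_{\e}-nt)=-tr_{\dr_{\e}}\dr_{0,\e}$ and the lower-bound argument via $Ric(\dr_{M})\leq-C_{0}\dr_{M}$ both check out). Your alternative lower bound via Proposition \ref{prop-schwarz} is equivalent in substance, and your remark that $tr_{\dr_{\e}}\dr_{0,\e}$ is bounded below for fixed $\e$ is a welcome extra precision for the Omori--Yau step that the paper glosses over.
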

The proof is almost the same to Lemma \ref{lem-time dev} and we skip it. Next we will derive a uniform Laplacian estimate for $u_{\e}.$ As the background metric
$\dr_{M}$ is assumed to have uniformly bounded bisectional curvature, similar to Lemma \ref{lem-C2}--\ref{lem-high}
 we have the following high order derivative estimates:
\begin{lemma}\label{lem-general-high}
There exist constants $C_{3}(t),C_{4}(t),C_{5}(t)>0$ independent of $\e$ such that
\begin{align}\label{eq:general-C2}
C_{3}(t)\dr_{M}\leq\dr_{\e}(t)&\leq C_{4}(t)\dr_{M},\\
|\nabla_{\dr_{M}}\dr_{\e}|^{2}_{\dr_{\e}}&\leq e^{\frac{C_{5}(t)}{t}},
\end{align}
and a constant $C_{t_{1},t_{2},k}>0$
for $0<t_{1}<t_{2}<T_{max}$ such that
\begin{equation}\label{eq:high order}
|u_{\e}|_{C^{k}([t_{1},t_{2}]\times M,\dr_{M})}\leq C_{t_{1},t_{2},k}.
\end{equation}
\end{lemma}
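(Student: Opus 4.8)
The plan is to reproduce the proofs of Lemmas \ref{lem-C2} and \ref{lem-high} almost verbatim, with the Poincar\'e metric $\bar{\dr}$ everywhere replaced by the complete background $\dr_M$ and with no $f$-term (the role of $\ddb f$ being played directly by $-Ric(\dr_M)$). Two facts make this possible: on every slab $[0,T]$ the evolving backgrounds $\dr_{t,\e}=\dr_0-(t+\e)Ric(\dr_M)$ have uniformly bounded bisectional curvature (verified in the paragraph preceding the lemma), and $\dr_M$ itself has bounded bisectional curvature and bounded covariant derivatives of its curvature by the hypotheses of Theorem \ref{mainthm2}. Moreover, differentiating \eqref{eq:cma-general-app} and using $\ddb\dot u_\e=-Ric(\dr_\e)+Ric(\dr_M)$ shows that $\dr_\e(t)$ solves the \ka-Ricci flow, so Proposition \ref{prop-max} and the Chern--Lu inequality \eqref{eq:C-L} apply directly to it.

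\emph{Step 1 (two-sided Laplacian estimate).} Let $k\ge 0$ bound the bisectional curvature of $\dr_M$ from above, and recall $C_1(t+\e)\dr_M\le\dr_{t,\e}$. Differentiating \eqref{eq:cma-general-app} in $t$ gives $(\dt-\Delta_\e)\dot u_\e=tr_{\dr_\e(t)}(-Ric(\dr_M))\ge C_1\,tr_{\dr_\e}\dr_M$, while \eqref{eq:C-L} with $\dr_2=\dr_M$ gives $(\dt-\Delta_\e)\log tr_{\dr_\e}\dr_M\le k\,tr_{\dr_\e}\dr_M$. Hence $\Phi:=\log tr_{\dr_\e}\dr_M-A\dot u_\e+B\log t$ satisfies $(\dt-\Delta_\e)\Phi\le(k-AC_1)\,tr_{\dr_\e}\dr_M+B/t$, and I choose $A=(k+1)/C_1$ so the first coefficient is $-1$. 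Since $\dr_\e(0)=\dr_{0,\e}\ge C_1\e\,\dr_M$ and $\dot u_\e(0)=\log(\dr_{0,\e}^n/\dr_M^n)\ge n\log(C_1\e)$, the function $\Phi$ tends to $-\infty$ as $t\to 0$, so its maximum on $M\times[0,T]$ is attained at some $(p,\bar t)$ with $\bar t>0$ (when the maximum is not attained one invokes Proposition \ref{prop-max} exactly as in Lemma \ref{lem-C0}). There $tr_{\dr_\e}\dr_M(p,\bar t)\le B/\bar t$, and taking $B=n+1$ and feeding in Lemma \ref{lem-general time dev} gives $\Phi\le C(t)$ everywhere, hence, using the upper bound for $\dot u_\e$, $tr_{\dr_\e}\dr_M\le C_4(t)$. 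For the reverse bound, \eqref{eq:cma-general-app} gives $\dr_\e^n=\dr_M^n e^{\dot u_\e}\ge t^n e^{-C_1(t)}\dr_M^n$, which together with the upper bound just obtained on the eigenvalues of $\dr_\e$ relative to $\dr_M$ forces the smallest such eigenvalue to be at least $C_3(t):=t^n e^{-C_1(t)}C_4(t)^{1-n}$.

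\emph{Step 2 (third-order estimate) and Step 3 (all higher orders).} Put $S:=|\nabla_{\dr_M}\dr_\e|^2_{\dr_\e}$, which up to lower order terms is $|\Gamma(\dr_\e)-\Gamma(\dr_M)|^2$; as in Shi \cite{Sh1,Sh2} and Chau \cite{Chau}, $S$ obeys a Bochner-type inequality $(\dt-\Delta_\e)S\le-c\,|\nabla_{\dr_\e}(\Gamma(\dr_\e)-\Gamma(\dr_M))|^2+C(t)(S+1)$, the constants being controlled by Step 1 and by the curvature bounds on $\dr_M$. Applying Proposition \ref{prop-max} to $tS-A\log tr_{\dr_\e}\dr_M$ (or an analogous combination, the factor $t$ absorbing the degeneracy of $\dr_{t,\e}$ at $t=0$) cancels the gradient-squared error and yields $S\le e^{C_5(t)/t}$. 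On a fixed $[t_1,t_2]\subset(0,T_{\max})$, Step 1 makes \eqref{eq:cma-general-app} uniformly parabolic and Step 2 bounds the first derivatives of its coefficients; working in $\dr_M$-geodesic balls, where $\dr_M$ has bounded geometry, the parabolic Evans--Krylov theorem gives a uniform interior $C^{2,\al}$ bound, and then parabolic Schauder estimates and bootstrapping give $|u_\e|_{C^k([t_1,t_2]\times M,\dr_M)}\le C_{t_1,t_2,k}$ uniformly in $\e$, exactly as for Lemmas \ref{lem-C2}--\ref{lem-high}.

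\emph{Main obstacle.} The genuinely delicate point is the $C^3$ estimate of Step 2 together with the requirement that \emph{all} the estimates be simultaneously uniform in $\e$ and localizable on the noncompact manifold $M$: one must track the rate at which $\dr_{t,\e}$ degenerates as $t\to 0$ (this is the source of the $e^{C_5(t)/t}$ form of the bound) and must lean on the bounded-geometry hypotheses for $\dr_M$ both to run the Omori--Yau principle of Proposition \ref{prop-max} and to apply the local parabolic regularity theory uniformly. Modulo these points, the argument is identical to the one carried out in the previous section for Theorem \ref{mainthm1}.
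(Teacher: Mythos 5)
Your overall strategy is exactly the one the paper intends: it gives no separate proof of this lemma and simply points back to Lemmas \ref{lem-C2}--\ref{lem-high}, and your Steps 2--3 and the verification that $\dr_{\e}(t)$ solves the \ka-Ricci flow (so that Proposition \ref{prop-max} and \eqref{eq:C-L} apply) reproduce that argument faithfully, with $\bar{\dr}$ replaced by $\dr_{M}$ and $(n+1)\bar{\dr}$ replaced by $-Ric(\dr_{M})$.

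There is, however, a concrete directional error in the second half of Step 1. The Chern--Lu argument yields $tr_{\dr_{\e}}\dr_{M}\leq C_{4}(t)$; if $\mu_{1},\dots,\mu_{n}$ denote the eigenvalues of $\dr_{\e}$ relative to $\dr_{M}$, this says $\sum_{i}\mu_{i}^{-1}\leq C_{4}(t)$, i.e.\ it is a \emph{lower} bound $\mu_{i}\geq C_{4}(t)^{-1}$, equivalently $\dr_{\e}\geq C_{4}(t)^{-1}\dr_{M}$ --- not an upper bound on the $\mu_{i}$ as you assert. Consequently your ``reverse bound'' paragraph, which combines this with the volume-form \emph{lower} bound $\dr_{\e}^{n}\geq t^{n}e^{-C_{1}(t)}\dr_{M}^{n}$, only re-derives a lower bound on the smallest eigenvalue and never establishes $\dr_{\e}(t)\leq C_{4}(t)\dr_{M}$. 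The correct combination, which is exactly what the paper does at the end of Lemma \ref{lem-C2}, is to pair the eigenvalue lower bound from Chern--Lu with the volume-form \emph{upper} bound coming from the upper bound on $\dot{u}_{\e}$ in Lemma \ref{lem-general time dev}, namely $\dr_{\e}^{n}=\dr_{M}^{n}e^{\dot{u}_{\e}}\leq\dr_{M}^{n}e^{C_{2}(t)/t+n}$: then $\mu_{\max}=\left(\prod_{i}\mu_{i}\right)\prod_{i\neq\max}\mu_{i}^{-1}\leq e^{C_{2}(t)/t+n}\,C_{4}(t)^{n-1}$, which gives the missing upper bound. This is a one-line fix, but as written the two-sided estimate \eqref{eq:general-C2} is not proved.
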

Thus similarly by the uniform estimates above we derive that there exists a subsequence $\e_{k}\searrow 0$ such that $u_{\e_{k}}$ converges to a solution solving the original complex Monge-Amp\'{e}re flow equation \eqref{eq:cma-general} in the space of $C^{0}([0,T)\times M)\cap C^{\infty}((0,T)\times M)$ for any $T>0.$ And by the Laplacian estimate in Lemma \ref{lem-general-high} the flow solution $\dr(t)=\dr_{t}+\ddb u$ is complete as soon as $t>0.$ Moreover the uniqueness can also be proved using the same method in Proposition \ref{prop-unique}.\\

Now we begin to consider the normalized \ka-Ricci flow on $M:$
\begin{equation}\label{eq:general-krf2}
 \left\{ \begin{array}{rcl}
\dt\tilde{\dr}&=&-Ric(\tilde{\dr})-\tilde{\dr}\\
\tilde{\dr}(0)&=&\tilde{\dr}_{0} \end{array}\right.
\end{equation}
Similarly the solution to \eqref{eq:general-krf2} can be derived from the solution $\dr(t)$ of the unnormalized \ka-Ricci flow \eqref{eq:krf1} by the equation $$\tilde{\dr}(t)=e^{-t}\dr(e^{t}-1).$$ To describe the limit behavior of the normalized \ka-Ricci flow the remaining issue is to establish uniform estimates for
the solution $\tilde{\dr}(t).$ Now set $\tilde{\dr}_{t}:=e^{-t}\dr_{0}-(1-e^{-t})Ric(\dr_{M})$ and $\tilde{\dr}(t):=\tilde{\dr}_{t}+\ddb\tilde{u}$ we can derive the following complex Monge-Amp\'{e}re flow equation:
\begin{equation}\label{eq:general-cma2-nor}
\left\{ \begin{array}{rcl}
\dt\tilde{u}&=&\log\frac{(\tilde{\dr}_{t}+\ddb\tilde{u})^{n}}{\dr_{M}^{n}}-\tilde{u}\\
\tilde{u}(0)&=&0 \end{array}\right.
\end{equation}
Use the argument before we can prove that
there exists a unique solution $\tilde{u}$ in the space of $C^{0}([0,T)\times M)\cap C^{\infty}((0,T)\times M)$ for any $T>0.$ Moreover the metric
$\tilde{\dr}(t)$ is simultaneously complete as soon as $t>0.$ To derive uniform estimates for $\tilde{u}(t),$ instead of the direct application of the maximal principle in Lemma \ref{lem-C0-nor}, we will proceed as Theorem 4.1--5.1 in Lott-Zhang \cite{LZ} (here we want to thank M.C. Lee for pointing out this result).
\begin{lemma}\label{lem-general-C0-nor}
For $t_{0}>0,$ there exists a uniform constant $C>0$ such that
\begin{equation}\label{eq:C0-general-nor}
|\tilde{u}(t)|\leq C,\;|\dot{\tilde{u}}(t)|\leq C
\end{equation}
on $[t_{0},+\infty)\times M.$
\end{lemma}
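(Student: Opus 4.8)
The plan is to carry over the maximum-principle scheme used for the normalized flow in Section~3 (Lemmas~\ref{lem-C0-nor}--\ref{lem-time dev-nor}) to equation \eqref{eq:general-cma2-nor}. The essential difference is that here we have no explicit K\"{a}hler--Einstein potential playing the role of $f$, so the bound on $\tilde u$ cannot be read off from a closed-form comparison such as \eqref{eq:C0-precise}; it must instead be extracted directly from the damping term $-\tilde u$ in \eqref{eq:general-cma2-nor}, which is the point at which we follow Lott--Zhang \cite{LZ}. First I would record that, for $t\ge t_{0}$, the evolving background
\[
\tilde\dr_{t}=e^{-t}\dr_{0}-(1-e^{-t})Ric(\dr_{M})
\]
is uniformly comparable to $\dr_{M}$: from $\dr_{0}\le c'\dr_{M}$ and $C_{1}\dr_{M}\le-Ric(\dr_{M})\le C_{2}\dr_{M}$ one gets $(1-e^{-t_{0}})C_{1}\dr_{M}\le\tilde\dr_{t}\le(c'+C_{2})\dr_{M}$, and in particular $\log(\tilde\dr_{t}^{n}/\dr_{M}^{n})$ is uniformly bounded on $[t_{0},\infty)$.

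For the bound on $\tilde u$, fix $T>t_{0}$ and apply the Omori--Yau principle of Proposition~\ref{prop-max}(i) to $\tilde u$ on $[t_{0},T]\times M$. At a near-maximum sequence $(p_{k},\bar t)$ the inequality $\ddb\tilde u\le\tfrac1k\tilde\dr(t)$ reads $\tilde\dr(t)\le\tfrac{k}{k-1}\tilde\dr_{t}$, so by \eqref{eq:general-cma2-nor} one has $\dot{\tilde u}+\tilde u=\log(\tilde\dr(t)^{n}/\dr_{M}^{n})\le C+o(1)$ at that point; if $\bar t>t_{0}$ the bounded-$\partial_{t}^{2}$ Taylor argument of Lemma~\ref{lem-C0} forces $\dot{\tilde u}(p_{k},\bar t)\ge-o(1)$, hence $\tilde u\le C$, while if $\bar t=t_{0}$ one uses the fixed-time bound already available from the construction of $\tilde u$. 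Symmetrically, at a near-minimum $\ddb\tilde u\ge-\tfrac1k\tilde\dr(t)$ gives $\tilde\dr(t)\ge\tfrac{k}{k+1}\tilde\dr_{t}$ and $\dot{\tilde u}+\tilde u\ge-C+o(1)$, and the same Taylor argument yields $\tilde u\ge-C$. Letting $T\to\infty$ gives $|\tilde u|\le C$ on $[t_{0},\infty)\times M$.

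For $\dot{\tilde u}$ I would reuse the two auxiliary quantities of Lemma~\ref{lem-time dev-nor}, adapted to the $-\tilde\dr$ normalization. Using $\dot{\tilde\dr}_{t}=-e^{-t}(\dr_{0}+Ric(\dr_{M}))$ and the identity $(1-e^{-t})(\dr_{0}+Ric(\dr_{M}))+\tilde\dr_{t}=\dr_{0}$, a direct computation gives
\[
(\dt-\Delta)\big((e^{t}-1)\dot{\tilde u}-\tilde u-nt\big)=-tr_{\tilde\dr}\dr_{0}\le0 ;
\]
since this quantity stays bounded above as $t\to0^{+}$ (by the normalized analogue of Lemma~\ref{lem-general time dev}), the maximum principle keeps it bounded above for all $t$, and as $e^{t}-1\ge e^{t_{0}}-1>0$ and $\tilde u$ is bounded we get $\dot{\tilde u}\le C$ on $[t_{0},\infty)$. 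For the lower bound, for large $T$ the analogous computation gives
\[
(\dt-\Delta)\big((1-e^{t-T})\dot{\tilde u}+\tilde u\big)=tr_{\tilde\dr}\tilde\dr_{T}-n ,
\]
so applying Proposition~\ref{prop-max}(ii) at a near-minimum $(p,t')$ with $t'\in(t_{0},T)$ of this quantity one gets $tr_{\tilde\dr}\tilde\dr_{T}(p,t')\le n$, hence $\tilde\dr(p,t')\ge c\,\tilde\dr_{T}(p,t')\ge c'\dr_{M}$ there; plugging this into \eqref{eq:general-cma2-nor} and invoking $|\tilde u|\le C$ bounds $\dot{\tilde u}(p,t')$ from below uniformly in $T$, and since $1-e^{t-T}\ge1-e^{-1}$ on $t\le T-1$, letting $T\to\infty$ gives $\dot{\tilde u}\ge-C$ on $[t_{0},\infty)$. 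This proves the lemma.

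I expect the main obstacle to be the $C^{0}$ bound for $\tilde u$: with no closed-form comparison available it must be squeezed out of the maximum principle on the noncompact manifold $M$, which forces a careful combination of the Hessian clause of Proposition~\ref{prop-max} (used to control $\tilde\dr(t)$, and hence $\log(\tilde\dr(t)^{n}/\dr_{M}^{n})$, at near-extremal points) with the second-time-derivative Taylor trick of Lemma~\ref{lem-C0} that prevents the extremum from sliding to $t=0$. A secondary point to be verified is the behaviour of $\dot{\tilde u}$ as $t\to0^{+}$ — the normalized analogue of Lemma~\ref{lem-general time dev} — since this is what makes the subsolution $(e^{t}-1)\dot{\tilde u}-\tilde u-nt$ bounded above near the initial time and thus starts the argument for $\dot{\tilde u}$.
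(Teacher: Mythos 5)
Your argument is correct, but it takes a genuinely different route from the paper, which explicitly avoids ``the direct application of the maximal principle in Lemma \ref{lem-C0-nor}'' and instead follows Lott--Zhang. The paper differentiates \eqref{eq:general-cma2-nor} twice in time and adds the two evolution equations to get $(\dt-\Delta)(\dot{\tilde{u}}+\ddot{\tilde{u}})=-|\partial_{t}^{2}\tilde{\dr}|^{2}_{\tilde{\dr}}-(\dot{\tilde{u}}+\ddot{\tilde{u}})$, hence $(\dt-\Delta)\left(e^{t}(\dot{\tilde{u}}+\ddot{\tilde{u}})\right)\leq 0$; Omori--Yau then gives $e^{t}(\dot{\tilde{u}}+\ddot{\tilde{u}})\leq C_{1}$, i.e. $e^{t}\dot{\tilde{u}}\leq C_{1}t+C_{2}$, which yields the upper bounds for $\dot{\tilde{u}}$ and, after integration, for $\tilde{u}$ in one stroke. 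The lower bounds come from the parabolic-Schwarz-type inequality $(\dt-\Delta)(\tilde{u}+\dot{\tilde{u}})\geq-n+Cne^{-(\tilde{u}+\dot{\tilde{u}})/n}$, which uses only $Ric(\dr_{M})\leq-C\dr_{M}$ and controls $\tilde{u}+\dot{\tilde{u}}=\log(\tilde{\dr}^{n}/\dr_{M}^{n})$ at a near-minimum. You instead transplant the Section~3 machinery: the Hessian clause of Proposition \ref{prop-max} combined with the second-time-derivative Taylor trick at near-extrema of $\tilde{u}$ (exploiting the damping term $-\tilde{u}$ and the uniform equivalence of $\tilde{\dr}_{t}$ with $\dr_{M}$ on $[t_{0},\infty)$), and the two auxiliary quantities $(e^{t}-1)\dot{\tilde{u}}-\tilde{u}-nt$ and $(1-e^{t-T})\dot{\tilde{u}}+\tilde{u}$, whose evolution identities you state correctly. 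Both routes prove the lemma, and both in fact recover the decay of the upper bound of $\dot{\tilde{u}}$ that the paper needs afterwards (the paper from \eqref{eq:time-dev-decay}, you from $(e^{t}-1)\dot{\tilde{u}}\leq C+\tilde{u}+nt$). The paper's computation is shorter and needs only the bound $|\tilde{u}(t_{0})|+|\dot{\tilde{u}}(t_{0})|\leq C'$ at the single time $t_{0}$, whereas your $Q_{1}$ argument as written invokes the behaviour of $\dot{\tilde{u}}$ as $t\to 0^{+}$ (the normalized analogue of Lemma \ref{lem-general time dev}); this is harmless but can be avoided entirely by starting that argument at $t_{0}/2$. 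Your lower bound for $\dot{\tilde{u}}$ also uses both sides of the bound on $Ric(\dr_{M})$ (through $\tilde{\dr}_{T}\geq c\dr_{M}$ at the near-minimum), while the paper's Schwarz-lemma step needs only the upper Ricci bound; under the stated hypotheses this makes no difference.
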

\begin{proof}
It follows from \eqref{eq:general-cma2-nor} that 
\begin{align*}
(\dt-\Delta)\dot{\tilde{u}}&=-\dot{\tilde{u}}-e^{-t}tr_{\tilde{\dr}}(\dr_{0}+Ric(\dr_{M})),\\
(\dt-\Delta)\ddot{\tilde{u}}&=-\ddot{\tilde{u}}+e^{-t}tr_{\tilde{\dr}}(\dr_{0}+Ric(\dr_{M}))-\left|\frac{\partial^{2}\tilde{\dr}}{\partial t^{2}}\right|^{2}_{\tilde{\dr}},
\end{align*}
where $\tilde{\dr}$ denotes $\tilde{\dr}(t).$ It follows from these two equations that
\begin{equation}\label{eq:sum-dev}
(\dt-\Delta)(\dot{\tilde{u}}+\ddot{\tilde{u}})=
-\left|\frac{\partial^{2}\tilde{\dr}}{\partial t^{2}}\right|^{2}_{\tilde{\dr}}-(\dot{\tilde{u}}+\ddot{\tilde{u}}),
\end{equation}
which implies that 
$$(\dt-\Delta)e^{t}(\dot{\tilde{u}}+\ddot{\tilde{u}})\leq 0.$$
By the estimates for the solution to the unnormalized \ka-Ricci flow we know that for $t_{0}>0$ there exists a constant $C'>0$ such that 
$$|\tilde{u}(t_{0})|+|\dot{\tilde{u}}(t_{0})|\leq C'.$$ Thus for any $t>t_{0}$ it follows from the Omori-Yau's maximal principle that 
$$e^{t}(\dot{\tilde{u}}+\ddot{\tilde{u}})\leq C'e^{t_{0}}\leq C_{1}.$$ So we have 
\begin{equation}\label{eq:time-dev-decay}
e^{t}\dot{\tilde{u}}\leq C_{1}t+C_{2},
\end{equation}
 which implies that $$\tilde{u}(t)\leq C_{3},\;\dot{\tilde{u}}(t)\leq C_{3}.$$
For the lower bound estimates, considering that $Ric(\dr_{M})\leq-C\dr_{M},$ it follows from \eqref{eq:general-cma2-nor} that 
\begin{align*}
(\dt-\Delta)(\tilde{u}+\dot{\tilde{u}})&=-n-tr_{\tilde{\dr}}Ric(\dr_{M})\\
&\geq-n+Ctr_{\tilde{\dr}}\dr_{M}\geq-n+Cn\left(\frac{\dr^{n}_{M}}{\tilde{\dr}^{n}}\right)^{\frac{1}{n}}
=-n+Cne^{-\frac{\tilde{u}+\dot{\tilde{u}}}{n}}.
\end{align*}
Applying Omori-Yau maximal principle again and assuming the infimum of $\tilde{u}+\dot{\tilde{u}}$ is attained at $(x,t),$ at this point
it follows that $$-n+Cne^{-\frac{\tilde{u}+\dot{\tilde{u}}}{n}}\leq 0.$$ Thus $$\tilde{u}+\dot{\tilde{u}}\geq C_{4}.$$ Combine the upper bound estimates of
$\tilde{u}$ and $\dot{\tilde{u}}$ this lemma follows.
\end{proof}
Similar to Lemma \ref{lem-C2-nor} we have the following Laplacian estimate for $\tilde{u}:$
\begin{lemma}\label{lem-general-estimates}
There exists a uniform constant $C$ and $t_{0}>0$ such that for all $t>t_{0}$,
\begin{equation}\label{eq:C2-general-nor}
C^{-1}\dr_{M}\leq\tilde{\dr}_{t}+\ddb\tilde{u}\leq C\dr_{M}.
\end{equation}
\end{lemma}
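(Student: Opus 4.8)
The plan is to follow the device used in Lemma~\ref{lem-C2-nor}: a perturbed Chern--Lu estimate, now with the background metric $\dr_{M}$ in place of the Poincar\'e-type metric $\bar{\dr}$, and with the hypothesis $Ric(\dr_{M})\leq-C_0\dr_{M}$ playing the role taken there by the asymptotic curvature $-1$. Since the curvature tensor of $\dr_{M}$ is uniformly bounded, its bisectional curvature is bounded above by some constant $k\geq0$, and the analogue of \eqref{eq:C-L-nor} for the normalization in \eqref{eq:general-krf2} gives
\[
(\dt-\Delta)\log tr_{\tilde{\dr}}\dr_{M}\leq k\, tr_{\tilde{\dr}}\dr_{M}-1 ,
\]
where $\Delta$ and $\tilde{\dr}$ refer to $\tilde{\dr}(t)=\tilde{\dr}_{t}+\ddb{\tilde{u}}$ and only the term $k\, tr_{\tilde{\dr}}\dr_{M}$ will actually be used. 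On the other hand, the computation already recorded in the proof of Lemma~\ref{lem-general-C0-nor} reads $(\dt-\Delta)(\tilde{u}+\dot{\tilde{u}})=-n-tr_{\tilde{\dr}}Ric(\dr_{M})$, so that $Ric(\dr_{M})\leq-C_0\dr_{M}$ gives $(\dt-\Delta)(\tilde{u}+\dot{\tilde{u}})\geq-n+C_0\, tr_{\tilde{\dr}}\dr_{M}$.

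Then I would form $H:=\log tr_{\tilde{\dr}}\dr_{M}-A(\tilde{u}+\dot{\tilde{u}})$ with $A>0$ chosen so large that $k-AC_0\leq-1$, so that combining the two inequalities above yields
\[
(\dt-\Delta)H\leq(k-AC_0)\, tr_{\tilde{\dr}}\dr_{M}+An-1\leq-tr_{\tilde{\dr}}\dr_{M}+An .
\]
Fix any $t_0>0$. On each slab $M\times[t_0,T]$ the solution $\tilde{\dr}(t)$ has uniformly bounded curvature and, with $T$-dependent constants, is equivalent to $\dr_{M}$: this follows from the higher-order estimates for the unnormalized flow (Lemma~\ref{lem-general-high}) through the scaling $\tilde{\dr}(t)=e^{-t}\dr(e^{t}-1)$, and it also shows that $H$ is bounded above on that slab, so Proposition~\ref{prop-max} applies there. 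At the initial time $t=t_0$ the metric $\tilde{\dr}(t_0)$ is equivalent to $\dr_{M}$, hence $H(\cdot,t_0)\leq C(t_0)$ once one also invokes the bounds $|\tilde{u}|,|\dot{\tilde{u}}|\leq C$ of Lemma~\ref{lem-general-C0-nor}. Running the maximal principle for $H$ on $M\times[t_0,T]$ (assuming, as elsewhere in the paper, that the maximum is attained, otherwise passing to a maximizing sequence): if it is attained at $(p,\bar{t})$ with $\bar{t}>t_0$ then $(\dt-\Delta)H\geq0$ there, hence $tr_{\tilde{\dr}}\dr_{M}(p,\bar{t})\leq An$, so $H(p,\bar{t})$ is bounded by a constant depending only on $A$, $n$ and the $C^{0}$-bounds; otherwise $\sup H\leq C(t_0)$. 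In either case $\sup_{M\times[t_0,T]}H\leq C$ with $C$ independent of $T$, and feeding $|\tilde{u}+\dot{\tilde{u}}|\leq C$ back in yields $tr_{\tilde{\dr}}\dr_{M}\leq C$ on $M\times[t_0,\infty)$.

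This last bound is exactly the lower estimate $\tilde{\dr}(t)\geq C^{-1}\dr_{M}$; since it controls all $n$ eigenvalues of $\tilde{\dr}$ relative to $\dr_{M}$ from below, it combines with the volume comparison $\log\frac{\tilde{\dr}^{n}}{\dr_{M}^{n}}=\tilde{u}+\dot{\tilde{u}}$ read off from \eqref{eq:general-cma2-nor}, uniformly bounded by Lemma~\ref{lem-general-C0-nor}, to give the matching upper estimate $\tilde{\dr}(t)\leq C\dr_{M}$, which is \eqref{eq:C2-general-nor}. The argument is routine given the estimates already in place; the one point to watch, as in Lemma~\ref{lem-C2-nor}, is that $H$ be genuinely bounded above on each finite slab $M\times[t_0,T]$ so that Proposition~\ref{prop-max} is legitimate, and that the constants produced at the slice $t=t_0$, at the interior maximum, and from the $C^{0}$ and time-derivative bounds all be independent of $T$, so that $T\to\infty$ costs nothing. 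The one place where the strict negativity of $Ric(\dr_{M})$ is genuinely used is the favorable sign in $(\dt-\Delta)(\tilde{u}+\dot{\tilde{u}})\geq-n+C_0\, tr_{\tilde{\dr}}\dr_{M}$, which is the exact counterpart of the mechanism in the proof of Lemma~\ref{lem-C2-nor}.
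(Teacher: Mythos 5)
Your argument is correct and is precisely the adaptation the paper intends: the paper gives no written proof here beyond ``Similar to Lemma~\ref{lem-C2-nor}'', and your combination of the Chern--Lu inequality with the evolution identity $(\dt-\Delta)(\tilde{u}+\dot{\tilde{u}})=-n-tr_{\tilde{\dr}}Ric(\dr_{M})$ and the hypothesis $Ric(\dr_{M})\leq-C_{0}\dr_{M}$ is exactly the mechanism of that lemma, with the only (correct) adjustment being that the exact identity available for $\bar{\dr}$ is replaced by an inequality. The constant term in your normalized Chern--Lu inequality is immaterial, as you note, so the proof stands as essentially the same approach as the paper's.
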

By the estimates above we can derive uniform high order estimates from standard iterations. From \eqref{eq:time-dev-decay}
as time tends to infinity $\tilde{u}(t)$ is almost non-increasing. Considering the uniform $C^{0}$-estimate \eqref{eq:C0-general-nor}
we conclude that $\tilde{u}(t)$ converges to a unique limit $\tilde{u}_{\infty}$ and moreover this convergence is smooth by the high order estimates.
It follows from \eqref{eq:general-cma2-nor} that the limit $\tilde{u}_{\infty}$ satisfies the following equation:
\begin{equation}\label{eq:limit cma-general}
\log\frac{(\tilde{\dr}_{\infty}+\ddb\tilde{u}_{\infty})^{n}}{\dr_{M}^{n}}=\tilde{u}_{\infty},
\end{equation}
 which implies that
$$Ric(\tilde{\dr}_{\infty}+\ddb\tilde{u}_{\infty})=Ric(\dr_{M})-\ddb\tilde{u}_{\infty}.$$ Thus the limit of the solution to \eqref{eq:general-cma2-nor} is 
$\tilde{\dr}(\infty)=-Ric(\dr_{M})+\ddb\tilde{u}_{\infty}$ which satisfying $Ric(\tilde{\dr}(\infty))=-\tilde{\dr}(\infty).$ This completes the proof of Theorem \ref{mainthm2}. In some sense considering the existence of the complete \ka-Einstein metric on bounded domains of holomorphy by Cheng-Yau and Mok-Yau \cite{CY,MY}, we have the following corollary which can be thought as a generalization of Chau's work on bounded domains \cite{Chau}:
\begin{corollary}\label{coro-domain}
For any incomplete metric $\dr_{0}$ with uniformly bounded covariant derivatives there exists a family of solutions to the normalized \ka-Ricci flow
\eqref{eq:general-krf2} which is simultaneously complete as soon as $t>0$ and converges to the complete \ka-Einstein metric.
\end{corollary}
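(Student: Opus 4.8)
The plan is to obtain Corollary \ref{coro-domain} as a direct application of Theorem \ref{mainthm2}, the only real task being to produce, on a bounded domain of holomorphy $D\subset\C$, a complete \ka\ background metric satisfying the hypotheses of that theorem. Concretely, I would exhibit a complete \ka\ metric $\dr_M$ on $D$ with $Ric(\dr_M)\le -C_0\dr_M$ for some $C_0>0$ and whose curvature tensor has uniformly bounded covariant derivatives of all orders; once this metric is in hand, Theorem \ref{mainthm2} supplies both the simultaneously complete flow and its convergence, and it remains only to identify the limit with the \ka--Einstein metric.

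The candidate for $\dr_M$ is the Cheng--Yau \ka--Einstein metric. By \cite{CY} together with Mok--Yau \cite{MY}, every bounded domain of holomorphy $D$ carries a unique complete \ka--Einstein metric $\dr_{KE}$ with $Ric(\dr_{KE})=-c_n\dr_{KE}$ for a fixed positive constant $c_n$, and --- this is the crucial input --- $\dr_{KE}$ has bounded geometry, in the sense that its curvature tensor and all of its covariant derivatives are uniformly bounded on $D$. Setting $\dr_M:=\dr_{KE}$, the Einstein condition gives $Ric(\dr_M)=-c_n\dr_M\le -c_n\dr_M$, so the Ricci hypothesis of Theorem \ref{mainthm2} holds with $C_0=c_n$, while the bounded--geometry statement is exactly the hypothesis on the covariant derivatives of the curvature tensor. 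Applying Theorem \ref{mainthm2} to the given incomplete $\dr_0$ (which by assumption has uniformly bounded covariant derivatives of all orders) then yields a long time solution to the \ka--Ricci flow, simultaneously complete for $t>0$, whose normalized version $\tilde{\dr}(t)$ from \eqref{eq:general-krf2} is simultaneously complete for $t>0$ and converges in $C^{\infty}$ to the \ka--Einstein metric $-Ric(\dr_M)+\ddb{\tilde{u}_{\infty}}$ produced in the proof of Theorem \ref{mainthm2}. Since that limit is a complete \ka--Einstein metric on $D$, and such a metric is unique by \cite{CY,MY}, it must coincide with $\dr_{KE}$ up to the normalization constant forced by \eqref{eq:general-krf2}; this is precisely the assertion of the corollary. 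Finally, because Chau's hypotheses in \cite{Chau} --- $\dr_0$ smooth, complete, with $Ric(\dr_0)+\dr_0=\ddb{f}$ for bounded $f$ --- form a special case of ours, the corollary indeed generalizes \cite{Chau}.

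The main obstacle is not in the flow arguments, which are already contained in Theorem \ref{mainthm2}, but in the complex--geometric input: one must know that the Cheng--Yau metric on a bounded domain of holomorphy genuinely has uniformly bounded curvature and uniformly bounded covariant derivatives of the curvature, which follows from the a priori (boundary) estimates of \cite{CY} refined by the completeness and curvature analysis of Mok--Yau \cite{MY}. Granting this bounded--geometry property, the corollary is a formal consequence of Theorem \ref{mainthm2} and no additional estimates are required.
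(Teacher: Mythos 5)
Your proposal is correct and follows essentially the same route as the paper, which states the corollary without a separate proof as an immediate application of Theorem \ref{mainthm2} taking the background metric $\dr_M$ to be the complete Cheng--Yau K\"ahler--Einstein metric on the bounded domain of holomorphy furnished by \cite{CY,MY}. Your explicit flagging of the needed bounded-geometry property of that metric is a point the paper likewise leaves implicit.
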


\section{Further discussions}

By Chau-Li-Tam's work \cite{CLT}, assume the initial metric $\dr_{0}$ is equivalent to the background metric $\dr_{M}$ in Theorem \ref{mainthm2}. If furthermore $\dr_{0}$ has bounded curvature or is the limit of a sequence of \ka\ metrics with bounded curvature, there exists a solution to the \ka-Ricci flow \eqref{eq:krf1} for infinite time. Considering our result Theorem \ref{mainthm2}, we conjecture that if the initial metric $\dr_{0}$ with bounded curvature or as the limit of a sequence of \ka\ metrics with bounded curvature, and there exists a constant $C>0$ such that $\dr_{0}\leq C\dr_{M}$ then we have
a solution for infinite time. More generally, for those results we still don't know whether we could drop the bounded curvature assumption, as Topping's work on surfaces \cite{Top1,Top2,Top3}. \\

Next, we still hope to extend Cheng-Yau's approximation method \cite{CY} in general pseudoconvex manifolds to the \ka-Ricci flow case. As we mentioned before the main difficulty is that unlike the \ka-Einstein case, we cannot use the local background Poincar\'{e} metric to control the flow solution from above. One possible idea is to apply Perelman's pseudolocality theorem to derive the local compactness of the \ka-Ricci flow solutions on the approximating domains. Also it will be wonderful if we can loose the assumptions on background metric, e.g., without the completeness or curvature bound. In
fact, we may consider general existence problem of the \ka-Ricci flow on Stein manifolds by this approach, especially when the initial metric has no curvature bound. In that general Stein manifold we only know that there exists a background metric with non-positive bisectional curvature descending from the standard embedding into the Euclidean space.\\

Finally we are curious that in what sense our construction of the flow solution is unique. Note that in Lee-Tam's work \cite{LT} they can construct complete solutions to Chern-Ricci flow in the same conformal class, which is closer to Topping's construction. It will be interesting to investigate the limit behavior of their approach. We also hope to know among all possible simultaneously complete flow solutions which solution is maximal stretched, as defined by Giesen-Topping \cite{GT2}, and whether it is unique.

\end{document}